\author{Philippe Biane and Matthieu Josuat-Vergès}
\title[Minimal factorizations of a cycle]{Minimal factorizations of a cycle: a multivariate generating function}
\address{Laboratoire d'Informatique Gaspard Monge, Université Paris-Est Marne-la-Vallée, CNRS}
\thanks{This work is supported by ANR CARMA (ANR-12-BS01-0017).}
\newcommand{\nsqsubset}{ \sqsubset\hspace{-1.8mm}/\hspace{1mm} }
\DeclareMathOperator{\wt}{wt}
\newtheorem{theo}{Theorem}
\newtheorem{coro}[theo]{Corollary}
\newtheorem{lemm}[theo]{Lemma}
\newtheorem{prop}[theo]{Proposition}
\newtheorem{defi}[theo]{Definition}
\begin{document}

\tikzset{every picture/.style={scale=0.4}}
 
\begin{abstract}
It is known that the number of minimal factorizations of the long cycle in the symmetric group into a 
product of $k$ cycles of given lengths has a very simple formula: it is $n^{k-1}$ where $n$ is the rank 
of the underlying symmetric group and $k$ is the number of factors. In particular, this is $n^{n-2}$ 
for transposition factorizations. The goal of this work is to prove a multivariate generalization of 
this result.
As a byproduct, we get a multivariate analog of Postnikov's hook length formula for trees,
and a refined enumeration of final chains of noncrossing partitions.
\end{abstract}
 
\maketitle

\section{Introduction}

Let $c$ be the long cycle $(1, 2, 3, \dots, n)$ in the symmetric group $\mathfrak{S}_n$. 
It is elementary to see that at least $n-1$ factors are needed to write $c$ as a product of transpositions,
such as $c=(1,2)(2,3)\dots (n-1,n)$.
So, a factorization 
\[
  c = t_1 \dots t_{n-1}
\]
where each $t_i$ is a transposition is called {\it minimal}.
The number of minimal factorizations of the cycle $c$ is $n^{n-2}$, as was first shown by Dénes~\cite{denes}.

One can interpret this result as the counting of the number of maximal chains in the lattice of noncrossing partitions  of $[1,n]$, whose definition is recalled in the text below.
The interval partitions, consisting of partitions of $[1,n]$ whose parts are intervals, form a sublattice of the noncrossing partitions, isomorphic to the Boolean lattice of subsets of $[1,n-1]$, whose number of maximal chains is easily seen to be $(n-1)!$. One of the results of this paper is a formula interpolating between these two, namely we give a generating function for maximal chains $\pi_0<\pi_1<\dots<\pi_{n-1}$ in the noncrossing partition lattice of the form
$$\sum_{\pi_0,\pi_1,\dots\pi_{n-1}} \wt(\pi_0,\pi_1,\dots,\pi_{n-1})=\prod_{i=1}^{n-2}(iX_i+n-i)$$
where the weight $\wt$ is a monomial in the $X_i$ and is equal to 1 exactly when $\pi_0,\pi_1,\ldots\pi_{n-1}$ is a maximal chain of  interval partitions.

Let $\underline a=(a_1,\dots,a_r)$ where $a_i\geq 2$. A factorization $c=z_1 \dots z_r$ 
where $z_i$ is a cycle of length $a_i$ is said to be {\it of type} $\underline a$.
This exists only if $\sum_{i=1}^r (a_i-1)\geq n-1$, and the factorization is called {\it minimal}
in case of equality. We only consider minimal factorizations here and assume $\sum_{i=1}^r (a_i-1)=n-1$
in the sequel.
The first author \cite{biane} showed that the number of minimal factorizations of type $\underline a$ is $n^{r-1}$,
in particular it only depends on $r$. This was independently obtained by Du and Liu \cite{duliu} 
(see also Irving~\cite{irving}, Springer~\cite{springer} and the far-reaching generalizations by 
Krattenthaler and Müller~\cite{krattenthaler}).

We denote $\mathcal{M}(\underline a)$ the set of minimal factorizations of $c$ of type $\underline a$.
Again one can interpret such factorizations as chains of a certain type in the lattice of noncrossing partitions and extend the definition of the
  weight $\wt$ to obtain the following  
\begin{theo}\label{mainth}
Let $b_i=\sum_{j=1}^i (a_j-1)$. We have:
\begin{equation} \label{maineq}
    \sum_{z_1 \dots z_r \in\mathcal{M}(\underline a) }  \wt(z_1\dots z_r) = \prod_{i=1}^{r-1} \big( b_iX_i + n-b_i  \big).
\end{equation}
\end{theo}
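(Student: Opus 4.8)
The plan is to exploit the interpretation of a minimal factorization $c = z_1\cdots z_r$ as a chain $\hat 0 = \pi_0 < \pi_1 < \cdots < \pi_r = \hat 1$ in the noncrossing partition lattice, where $\pi_i$ is the cycle partition of the partial product $z_1\cdots z_i$. Since each cycle $z_j$ lowers the number of blocks by $a_j-1$, the partition $\pi_i$ has $n-b_i$ blocks, that is $\mathrm{rank}(\pi_i)=b_i$, so the right-hand side of \eqref{maineq} reads $\prod_{i=1}^{r-1}\big(\mathrm{rank}(\pi_i)\,X_i+(n-\mathrm{rank}(\pi_i))\big)$, a product over the proper intermediate partitions of the chain. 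Setting all $X_i=1$ collapses each factor to $n$ and recovers Biane's count $n^{r-1}$, so the product structure suggests that $\mathcal{M}(\underline a)$ should admit a weight-preserving bijection onto $\prod_{i=1}^{r-1}\{1,\dots,n\}$ in which the $i$-th coordinate lands among $b_i$ distinguished values (contributing $X_i$) or among the remaining $n-b_i$ values (contributing $1$). Producing such a bijection is the cleanest possible outcome, and it is the target I would aim for.

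First I would try an induction on $r$ by peeling off the first factor $z_1$: choosing the noncrossing $a_1$-cycle $z_1$ and contracting its support to a single point turns $z_1^{-1}c$ into the long cycle on a ground set of size $n'=n-(a_1-1)=n-b_1$, leaving a minimal factorization of type $(a_2,\dots,a_r)$ there. Under the substitutions $b_i'=b_{i+1}-b_1$ and $n'-b_i'=n-b_{i+1}$, the induction hypothesis produces $\prod_{i=2}^{r-1}\big((b_i-b_1)X_i+(n-b_i)\big)$ for the contracted problem. This is where the main obstacle appears: the target factors are $b_iX_i+(n-b_i)$, and the coefficient of $X_i$ is off by exactly $b_1=a_1-1$. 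In other words $\wt$ does not transform trivially under contraction — collapsing the support of $z_1$ destroys $a_1-1$ of the ``interval defects'' recorded by the later variables. The key step is therefore to understand precisely how $\wt$ changes under contraction and to package this into a refined induction hypothesis (or an auxiliary sum) that restores the missing $\,b_1X_i\,$ contributions; I expect this bookkeeping to be the heart of the argument.

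An alternative route, and the one suggested by the stated link with Postnikov's hook length formula, is to pass through trees. A genus-zero (minimal) factorization of the $n$-cycle into cycles of lengths $a_1,\dots,a_r$ is encoded by a planar ``cactus'' obtained by gluing $r$ polygons of sizes $a_1,\dots,a_r$ in a tree fashion on $n$ vertices; the quantities $b_i$ then become subtree sizes (hook lengths) and the monomial $\wt$ becomes a product of the $X_i$ over vertices according to a planarity statistic that vanishes exactly on the ``linear'' cacti corresponding to interval chains. On this side the generating function is computed by the standard recursive deletion of a leaf-polygon (equivalently by Lagrange inversion), and the hook-length recursion should yield the product $\prod_{i=1}^{r-1}(b_iX_i+n-b_i)$ cleanly, each factor arising from the $n$ attachment positions of a polygon split into $b_i$ ``crossing'' and $n-b_i$ ``noncrossing'' choices. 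Whichever route is taken, the crux is the same: to pin down the meaning of $\wt$ precisely enough that its behaviour under the recursion (contraction of a cycle, or deletion of a polygon) is controlled, and the hardest point is verifying that the $X_i$-exponents reorganize into the clean linear factor $b_iX_i+(n-b_i)$ rather than the shifted factor one naively obtains.
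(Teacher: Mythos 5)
Your setup is the right one (the paper also works with chains $\hat 0=\pi_0<\dots<\pi_r=\hat 1$ in $NC_n$ and the weight recording non-interval splittings), but both routes you sketch stop exactly at the point you yourself flag as the difficulty, so this is a plan with a genuine gap rather than a proof. The obstacle in your inductive attempt is real and is caused by the choice of recursion: contracting the support of $z_1$ shrinks the ground set to $n'=n-b_1$, so \emph{all} the later ranks shift and you are left needing to restore a $b_1X_i$ contribution in every factor, with no mechanism to do so. The paper avoids this entirely by inducting at the other end and, crucially, by \emph{merging} rather than contracting: it replaces $\underline a=(a_1,\dots,a_r)$ by $\underline a'=(a_1,\dots,a_{r-2},a_{r-1}+a_r-1)$, i.e.\ it fuses the last two factors into a single longer cycle \emph{on the same ground set of size $n$}. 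Since the ground set is unchanged, the partial sums $b_1,\dots,b_{r-2}$ are identical for $\underline a$ and $\underline a'$, the induction hypothesis yields exactly $\prod_{i=1}^{r-2}(b_iX_i+n-b_i)$ with no shift, and the whole problem is reduced to producing the single factor $(n-a_r)X_{r-1}+a_r=b_{r-1}X_{r-1}+(n-b_{r-1})$.

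That last factor is where the second missing ingredient lies. The paper constructs a map $\Psi:\mathcal{N}(\underline a)\to\mathcal{N}(\underline a')$ together with a statistic $\beta$ valued in $\{1,\dots,n\}$ (a ``bar'' position) such that $(\Psi,\beta)$ is a bijection and, fiberwise, $\sum_{\Pi\in\Psi^{-1}(\Gamma)}\wt(\Pi)=\wt(\Gamma)\bigl((n-a_r)X_{r-1}+a_r\bigr)$; this realizes, one coordinate at a time, exactly the bijection onto $\prod_{i=1}^{r-1}\{1,\dots,n\}$ that your first paragraph posits as the target. But making it weight-preserving in the earlier variables is not automatic: $\Psi$ must relabel the chain by a permutation $\sigma$ chosen to be increasing on each block of $\pi_{r-2}$ (so that interval/near-interval splittings below level $r-2$ are preserved), and the definition of $(\sigma,\beta)$ requires a ten-case analysis according to the shapes of $\pi_{r-1}$ and $\pi_{r-2}$, plus a separate count showing that among the $n$ bar positions exactly $a_r$ produce an interval splitting at the top. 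None of this bookkeeping is present in your proposal. Your alternative route through cacti is legitimate in principle --- the paper's Section~\ref{seccacti} notes that the Kreweras--Moszkowski argument on decreasing edges extends to cacti and gives a full alternative proof --- but there too the substance (translating $\wt$ into a decreasing-edge statistic and proving the factorization by a Pr\"ufer-type code) is precisely what you leave unproved.
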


In the context of finite Coxeter groups, Dénes' result is a particular case of Deligne's 
formula~\cite{deligne} that gives the number of reflection factorizations of a Coxeter element. 
Deligne's formula has also been interpreted as the number of maximal chains in the noncrossing 
partition lattice~\cite{chapoton}. A one parameter refinement of this enumeration has been obtained 
by the second author in  \cite{josuat}, and it is what naturally leads to the definition 
of the weight used to get the multivariate versions.

In the particular case of transposition factorizations ($a_i=2$ for all $i$),
Theorem~\ref{mainth} is equivalent to a multivariate hook length formula for trees.
This will be presented in Section~\ref{hookf}.

Again in the particular case of transposition factorizations, our result is in fact equivalent to a multivari-
ate enumeration of Cayley trees of Kreweras and Moszkowski~\cite{kreweasmoszkowski}. Indeed, our weights can be 
translated in terms of decreasing edges of trees. More generally, it is possible to adapt the proof in \cite{kreweasmoszkowski} 
to the case of decreasing edges in cacti \cite{springer}, thus giving an alternative full proof of Theorem~\ref{mainth}. 
We give some details in Section~\ref{seccacti}.

\section{Definitions and preliminaries}
\subsection{Some classes of partitions}
\label{defs}

Let $T$ be a finite totally ordered set.

A {\it noncrossing partition} of $T$ is a set partition such that 
there is no $i<j<k<\ell\in T$ with $i$ and $k$ in one block, and $j$,$\ell$ in another one. The noncrossing partitions of $T$ form a sublattice of the lattice of set partitions, for the refinement order where $\pi\leq \pi'$ if each block of $\pi$ is a subset of some block of $\pi'$. We denote it by $NC_T$, by $\hat 0$ the partition with $|T|$ blocks which is the smallest element in $NC_T$ and by $\hat 1$ the partition with one block, which is the largest.

An {\it interval partition} of $T$ is a  set partition whose blocks are intervals i.e.  sets of the form
$$[a,b]=\{t\in T;a\leq t\leq b\}\ \text{for}\ a,b\in T.$$ 
The interval partitions form a sublattice $I_T$ of $NC_T$. Let $t_m$ be the maximal element of $T$, to any interval partition one can associate  the set of maximal elements of its blocks, this gives a subset of $T$ which contains $t_m$ and conversely, any such subset comes from a unique interval partition. Moreover the order on interval partitions corresponds to the reverse inclusion order on subset of $T$ containing $t_m$. Taking the intersection with $T\setminus\{t_m\}$ gives an isomorphism between $I_T$ and the Boolean lattice of subsets of $T\setminus \{t_m\}$. In particular, the maximal chains in $I_T$ are in bijection with permutations of 
$T\setminus \{t_m\}$.

The blocks of an interval partition are totally ordered by comparing their elements.
Given an interval partition $(I_j)_{j\in \mathcal J}$ with at least three blocks, with smallest block $I_0$ and largest block $I_t$,  the partition obtained by merging $I_0$ and $I_t$ will be called a {\it near interval partition}.
If $T=\{1,\dots,n\}$ the near interval partitions are the partitions which are not interval partitions, but can be rotated by $i\mapsto i+k \mod(n)$ for some $k$ to be transformed into an interval partition.

In the sequel we will consider these definitions when $T$ is the set $\{1,\dots,n\}$ or a subset with the induced order relation.
\subsection{Embedding noncrossing partitions into the symmetric group}
Let $\pi$ be  a noncrossing partition of $T$. There exists a unique   a permutation $\sigma_\pi$ of $T$ whose orbits are the parts of $\pi$ and, if $i_1<i_2<\dots<i_r$ form a block of $\pi$, then $\sigma_\pi(i_k)=i_{k+1\mod(r)}$.
This defines an embedding $\pi\mapsto \sigma_\pi$ of $NC_T$ into the  group $S_T$ of permutations of $T$.
The image of this embedding can be characterized geometrically. For each permutation $\sigma$ of $T$ let 
$l(\sigma)=|T|-c(\sigma)$ where $c(\sigma)$ is the number of orbits of $\sigma$ then $l=l(\sigma)$ is the smallest length of a factorization 
 $\sigma=t_1\dots t_l$ into a product of transpositions.  It follows that  $l$ is a length function i.e. $d(\sigma,\tau)=l(\sigma\tau^{-1})$ defines a distance $d$ on the group $S_T$, the distance  in the Cayley graph, with vertex set $S_T$, such that  $(\sigma,\sigma')$ is an edge if and only if  $\sigma^{-1}\sigma'$ is a transposition. 
Let $C$ be the long cycle of $S_T$ which maps each element of $T$ to its successor and the largest element to the smallest one, then a permutation $\sigma$ is of the form $\sigma_\pi$ if and only if it lies on a geodesic for $d$ between the identity permutation $id$ and $C$ that is, if
$l(\sigma)+l(C\sigma^{-1})=l(C)=|T|-1$. 
The order relation on $NC_T$ can also be characterized geometrically: one has $\pi\leq\pi'$ if and only if 
$l(\sigma_{\pi'})=l(\sigma_\pi)+l(\sigma_\pi^{-1}\sigma_{\pi'})$ that is, if $\pi$ lies on a geodesic between $id$ and $\pi'$.

The following lemmas follow from the above geometric characterization.
\begin{lemm} Let $C=azb$ be a factorization with $l(C)=l(a)+l(z)+l(b)$ and $z$ is a cycle on the elements
$i_1<i_2\dots<i_r$ then $z(i_k)=i_{k+1\mod(r)}$ for $k=1,\dots,r$.
\end{lemm}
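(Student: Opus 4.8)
The plan is to show that $a$ and $az$ are themselves images of noncrossing partitions under the embedding, with the first below the second, so that $z$ is the ``quotient'' $\sigma_\alpha^{-1}\sigma_\beta$ of two comparable noncrossing partitions; and then to prove that such a quotient, when it happens to be a single cycle, is forced to be the noncrossing cycle on its support. I organize this into two steps.

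First I would extract from the hypothesis $l(C)=l(a)+l(z)+l(b)$ that both $a$ and $az$ lie on a geodesic from $id$ to $C$, using only subadditivity of $l$ (concatenating transposition words gives $l(xy)\le l(x)+l(y)$). Indeed, the chain $l(C)=l((az)b)\le l(az)+l(b)\le l(a)+l(z)+l(b)=l(C)$ forces $l(az)=l(a)+l(z)$ and $l(C)=l(az)+l(b)$; since $b=(az)^{-1}C$, the latter is exactly the geodesic condition placing $az$ on a geodesic from $id$ to $C$, so $az=\sigma_\beta$ for some $\beta\in NC_T$. The analogous sandwich applied to $a\cdot(zb)$ gives $l(zb)=l(z)+l(b)$ and places $a$ on a geodesic from $id$ to $C$, so $a=\sigma_\alpha$. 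Finally $l(az)=l(a)+l(a^{-1}(az))$ says $\sigma_\alpha$ lies on a geodesic from $id$ to $\sigma_\beta$, i.e. $\alpha\le\beta$, and $z=\sigma_\alpha^{-1}\sigma_\beta$.

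The heart of the argument, and the step I expect to be the main obstacle, is to show that $z=\sigma_\alpha^{-1}\sigma_\beta$ is again a noncrossing permutation, i.e. $z=\sigma_\delta$ for some $\delta\in NC_T$. Because $\sigma_\beta$ cycles within each block $B$ of $\beta$ while $\sigma_\alpha^{-1}$ preserves every $\alpha$-block (hence every $\beta$-block, as $\alpha\le\beta$), the permutation $z$ preserves each block $B$ of $\beta$, and there $z|_B=\sigma_{\alpha|_B}^{-1}\,C_B$, where $C_B$ is the long cycle of $S_B$ and $\alpha|_B\le\{B\}$ in $NC_B$. The key computation is the identity $C_B\,(z|_B)^{-1}=\sigma_{\alpha|_B}$, which converts the geodesic condition for $z|_B$ into the one already known for $\sigma_{\alpha|_B}$: since $\alpha|_B\le\{B\}$ gives $l(C_B)=l(\sigma_{\alpha|_B})+l(z|_B)$, we obtain $l(z|_B)+l(C_B(z|_B)^{-1})=l(C_B)$, so $z|_B$ lies on a geodesic from $id_B$ to $C_B$ and is noncrossing within $B$. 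As the $B$'s are the parts of the noncrossing partition $\beta$, assembling the noncrossing permutations $z|_B$ yields a globally noncrossing permutation $z=\sigma_\delta$.

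It then remains to read off the cyclic order. The orbits of $z=\sigma_\delta$ are exactly the blocks of $\delta$; since by hypothesis $z$ is a single cycle with support $\{i_1<\dots<i_r\}$ and fixes everything else, $\delta$ has the single nontrivial block $\{i_1,\dots,i_r\}$ and singletons elsewhere. By the defining property of the embedding, $\sigma_\delta$ sends each $i_k$ to its cyclic successor within that block, i.e. $z(i_k)=i_{k+1\bmod r}$, which is the assertion. I expect the only delicate points to be the bookkeeping that $z$ preserves and restricts correctly to each $\beta$-block, and the verification that a permutation which is noncrossing on each of the disjoint parts of a noncrossing partition is noncrossing as a whole.
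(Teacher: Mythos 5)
Your proof is correct, but it takes a genuinely different route from the paper's, which is a one-liner: the paper rewrites $C=azb$ as $C=z(z^{-1}az)b$ and uses the conjugation-invariance of $l$ (so $l(z^{-1}az)=l(a)$) to conclude directly that $l(z)+l(z^{-1}C)\leq l(z)+l(z^{-1}az)+l(b)=l(C)$, i.e.\ $z$ itself lies on a geodesic from $id$ to $C$, hence $z=\sigma_\pi$ and the cyclic order is forced. You never conjugate; instead you sandwich lengths to place $a=\sigma_\alpha$ and $az=\sigma_\beta$ on geodesics with $\alpha\leq\beta$, and then prove the structural fact that the quotient $\sigma_\alpha^{-1}\sigma_\beta$ of nested noncrossing permutations is again noncrossing, by restricting to each block $B$ of $\beta$ and observing that $z|_B=\sigma_{\alpha|_B}^{-1}C_B$ satisfies the geodesic condition in $S_B$ (your identity $C_B(z|_B)^{-1}=\sigma_{\alpha|_B}$), then reassembling block-by-block. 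All the steps check out, including the two points you flag as delicate: $z$ does preserve each $\beta$-block because $\sigma_\alpha^{-1}$ preserves $\alpha$-blocks, and a partition refining each block of a noncrossing partition by noncrossing partitions is globally noncrossing. What your longer argument buys is a stronger statement proved along the way --- essentially the interval/complement structure of $NC_T$ (that $\sigma_\alpha^{-1}\sigma_\beta$ is noncrossing whenever $\alpha\leq\beta$, a relative of the Kreweras complement), which the paper in effect sidesteps by the conjugation trick; what the paper's proof buys is brevity, reducing everything to the single observation that $l$ is a class function.
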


\begin{proof}
One has  $C=z(z^{-1}az)b$ with $l(C)=l(z)+l((z^{-1}az))+l(b)$ therefore $z$ is on a geodesic from $id $ to $C$, so it is of the form $\sigma_\pi$.
\end{proof}

\begin{lemm} Let $C=yz$ be a minimal factorization with a cycle $z$, then $y=\sigma_{\pi}$ where $\pi$ is an interval or near interval partition.
\end{lemm}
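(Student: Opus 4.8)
The plan is to identify $y$ explicitly as a permutation and then read off the block structure of the associated partition. First I would use the order isomorphism to reduce to the case $T=\{1,\dots,n\}$ with $C=(1,2,\dots,n)$. Since $C=yz$ is minimal we have $l(C)=l(y)+l(z)$, so this factorization exhibits $y$ on a geodesic from $id$ to $C$ (one goes from $id$ to $y$ in $l(y)$ steps, then from $y$ to $C=yz$ in $l(z)$ steps); by the geometric characterization recalled above, $y=\sigma_\pi$ for some $\pi\in NC_T$, and the blocks of $\pi$ are exactly the orbits of $y$. Applying the previous lemma to $C=y\,z\,id$, whose lengths add up by minimality, the cycle $z$ permutes its support $B=\{i_1<i_2<\dots<i_r\}$ by $z(i_k)=i_{k+1\bmod r}$ and fixes everything outside $B$.

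Next I would compute the orbits of $y=Cz^{-1}$ directly. For $j\notin B$ one has $y(j)=C(j)=j+1\pmod n$, while for $i_k\in B$ one has $y(i_k)=C(z^{-1}(i_k))=C(i_{k-1})=i_{k-1}+1\pmod n$. Tracing the orbit starting just after $i_k$, the map adds $1$ repeatedly until it first meets the next point $i_{k+1}$ of $B$, where $y(i_{k+1})=i_k+1$ closes the cycle. Hence the orbits of $y$, i.e. the blocks of $\pi$, are exactly the cyclic intervals $(i_k,i_{k+1}]=\{i_k+1,\dots,i_{k+1}\}$ for $k=1,\dots,r$ (indices mod $r$), and these tile $\{1,\dots,n\}$.

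Finally I would split into two cases according to whether $n\in B$. If $i_r=n$, no block crosses the jump from $n$ to $1$, so every $(i_k,i_{k+1}]$ is a genuine interval and $\pi$ is an interval partition. If $i_r<n$, then exactly one block, namely $(i_r,i_1]=\{i_r+1,\dots,n\}\cup\{1,\dots,i_1\}$, wraps around and fails to be an interval, while the other $r-1$ blocks are intervals. Splitting this block into $[1,i_1]$ and $[i_r+1,n]$ produces the interval partition with blocks $[1,i_1],[i_1+1,i_2],\dots,[i_{r-1}+1,i_r],[i_r+1,n]$, whose smallest and largest blocks are $[1,i_1]$ and $[i_r+1,n]$; since $z$ is a cycle we have $r\geq 2$, so this partition has $r+1\geq 3$ blocks, and merging its smallest and largest recovers $\pi$. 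Thus $\pi$ is a near interval partition.

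The length bookkeeping and the identification $y=\sigma_\pi$ are routine; the only place demanding care is the orbit computation together with the wraparound case, where one must correctly single out the block $(i_r,i_1]$ and check that $r\geq 2$ guarantees at least three blocks after splitting, so that the definition of a near interval partition genuinely applies.
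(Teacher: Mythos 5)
Your proof is correct and follows the same route the paper intends: its proof of this lemma is literally ``follows easily from the previous lemma,'' and your argument is exactly that deduction carried out in full --- apply the previous lemma (with $b=id$) to pin down $z$ on its support, compute the orbits of $y=Cz^{-1}$ as cyclic intervals, and split into the cases $n\in B$ (interval) and $n\notin B$ (near interval, with the $r\geq 2$ check guaranteeing at least three blocks). Nothing to correct.
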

\begin{proof}
Follows easily from the previous lemma.
\end{proof}

More generally one has:
\begin{lemm}\label{near} Let $\pi$ be a noncrossing partition and $\sigma_\pi=\sigma_{\pi'}z$ be a minimal factorization with a cycle $z$ on $k$ elements,
then $\pi'$ is obtained from $\pi$ by splitting a block of $\pi$ into an interval or  near interval partition with $k$ blocks.
\end{lemm}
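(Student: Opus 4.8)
The plan is to reduce the statement to the preceding lemma (the case $\pi=\hat 1$, in which $\sigma_\pi=C$) by localizing the factorization to a single block of $\pi$. First I would reinterpret minimality geometrically. Writing $\sigma_\pi=\sigma_{\pi'}z$ with the factorization minimal means $l(\sigma_\pi)=l(\sigma_{\pi'})+l(z)$, so $\sigma_{\pi'}$ lies on a geodesic from $id$ to $\sigma_\pi$; by the characterization of the order recalled above this gives $\pi'\le\pi$ (and in particular $\pi'$ is again noncrossing, as $\sigma_{\pi'}$ then lies on a geodesic from $id$ to $C$). Since $l(\sigma_\pi)=|T|-|\pi|$, $l(\sigma_{\pi'})=|T|-|\pi'|$ and $l(z)=k-1$, the length identity yields $|\pi'|=|\pi|+(k-1)$: passing from $\pi$ to $\pi'$ creates exactly $k-1$ new blocks.

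Next I would use the block structure. Write $\sigma_\pi=\prod_{B\in\pi}C_B$ as a product of disjoint cycles, where $C_B$ is the long cycle of the block $B$ for the induced order. Because $\pi'\le\pi$, every block of $\pi'$ is contained in a block of $\pi$, so $\sigma_{\pi'}$ also preserves each block $B$ of $\pi$ and its restriction to $B$ equals $\sigma_{\pi'|_B}$, where $\pi'|_B$ is the noncrossing partition of $B$ induced by $\pi'$. Hence $z=\sigma_{\pi'}^{-1}\sigma_\pi$ preserves every block of $\pi$ as well. But $z$ is a single cycle, so its support is one orbit; it must therefore lie inside a single block $B_0$ of $\pi$, and for every other block $B$ one has $\sigma_{\pi'|_B}=C_B$, i.e. $\pi'$ leaves $B$ intact. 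Restricting the identity $\sigma_\pi=\sigma_{\pi'}z$ to $B_0$ gives $C_{B_0}=\sigma_{\pi'|_{B_0}}\,z$, and since $l$ is additive over disjoint supports this restricted factorization is again minimal; comparing lengths inside $B_0$ forces $\pi'|_{B_0}$ to have exactly $k$ blocks.

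Finally I would apply the preceding lemma to $B_0$, viewing $B_0$ as the totally ordered set on which $C_{B_0}$ is the long cycle: it asserts that $\sigma_{\pi'|_{B_0}}=\sigma_\rho$ for an interval or near interval partition $\rho$ of $B_0$, and we have just seen that $\rho=\pi'|_{B_0}$ has $k$ blocks. Thus $\pi'$ is obtained from $\pi$ by splitting the single block $B_0$ into an interval or near interval partition with $k$ blocks, as claimed. The main point to nail down is the localization step, namely that the single cycle $z$ cannot meet two distinct blocks of $\pi$ and that the restriction of the factorization to the affected block is itself minimal, so that the preceding lemma applies verbatim; once this is established, the rest is bookkeeping with the length function.
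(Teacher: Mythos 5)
Your proof is correct: the localization of $z$ to a single block of $\pi$, the length bookkeeping showing the restricted factorization $C_{B_0}=\sigma_{\pi'|_{B_0}}z$ is again minimal with $\pi'|_{B_0}$ having $k$ blocks, and the application of the preceding lemma to $B_0$ are all sound. The paper states this lemma without proof (presenting it as the evident generalization of the previous lemma), and your argument is precisely the reduction the paper leaves implicit, with the details filled in.
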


It is immediate to check, using the above properties that, if $C=t_1\dots t_{n-1}$ is a minimal factorization into transpositions, then one has
$\sigma_{\pi_0}=id,\sigma_{\pi_1}=t_1, \sigma_{\pi_2}=t_1t_2,\dots,\sigma_{\pi_{n-1}}=t_1\dots t_{n-1}=C$ where
$(\pi_0,\dots,\pi_{n-1})$ is a maximal chain in $NC_T$. 

\subsection{\texorpdfstring{Some classes of chains in $NC_T$}{Some classes of chains in NC T}}

\begin{defi}
We denote by $\mathcal{N}(\underline a)$ the set of $(r+1)$-tuple of noncrossing partitions $(\pi_0,\dots,\pi_r)$ such that:
\begin{itemize}
 \item $\pi_0 = \hat 0$ and $\pi_r=\hat 1$,
 \item $\pi_{i-1}$ is obtained from $\pi_i$ by splitting a block $B$ of $\pi$ into $a_i$ blocks $B_1,\dots,B_{a_i}$,
       which form either an interval partition or a near interval partition of $B$.
\end{itemize}
\end{defi}

\begin{prop} \label{propbijmn}
The map $(\pi_0,\dots,\pi_r)\mapsto (\sigma_{\pi_0},\sigma_{\pi_0}^{-1}\sigma_{\pi_1},\dots,\sigma_{\pi_{r-1}}^{-1}\sigma_{\pi_r})$ is a bijection from $\mathcal{N}(\underline a)$ to 
$\mathcal{M}(\underline a)$. 
\end{prop}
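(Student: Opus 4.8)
The plan is to exhibit an explicit inverse and to check that both maps are well defined and mutually inverse. Write the image of $(\pi_0,\dots,\pi_r)$ as $(z_1,\dots,z_r)$ with $z_i=\sigma_{\pi_{i-1}}^{-1}\sigma_{\pi_i}$ (the leading entry $\sigma_{\pi_0}=\mathrm{id}$ in the displayed tuple being the empty product). Since $\pi_0=\hat 0$ gives $\sigma_{\pi_0}=\mathrm{id}$ and $\pi_r=\hat 1$ gives $\sigma_{\pi_r}=C$, the product telescopes: $z_1\cdots z_r=\sigma_{\pi_0}^{-1}\sigma_{\pi_r}=C$. Recalling that $l(\sigma_\pi)=|T|-c(\sigma_\pi)$ with $c(\sigma_\pi)$ the number of blocks of $\pi$, the splitting condition defining $\mathcal N(\underline a)$ shows that passing from $\pi_i$ to $\pi_{i-1}$ raises the number of blocks by $a_i-1$, whence $l(\sigma_{\pi_{i-1}})=l(\sigma_{\pi_i})-(a_i-1)$ and $l(\sigma_{\pi_i})=b_i$.

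First I would prove that this map lands in $\mathcal M(\underline a)$, for which the one nontrivial point is that each $z_i$ is genuinely a cycle of length $a_i$. Here $\sigma_{\pi_{i-1}}$ and $\sigma_{\pi_i}$ coincide outside the block $B$ that is being split, so $z_i$ is supported on $B$; writing $B=\{x_1<\dots<x_m\}$, on which $\sigma_{\pi_i}$ acts as the cycle $(x_1\,x_2\cdots x_m)$, I would compute $z_i=\sigma_{\pi_{i-1}}^{-1}\sigma_{\pi_i}$ directly from the interval (resp. near interval) structure of the parts $B_1,\dots,B_{a_i}$. In the interval case $z_i$ fixes every non-maximal element of each part and cyclically permutes the $a_i$ maximal elements, hence is a single $a_i$-cycle; the near interval case is entirely analogous. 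Combined with $\sum_i l(z_i)=\sum_i(a_i-1)=n-1=l(C)$, this shows the image is a minimal factorization of type $\underline a$.

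For the reverse direction I would start from a minimal factorization $C=z_1\cdots z_r$ and set $\sigma_{\pi_i}:=z_1\cdots z_i$. Applying the triangle inequality for $d$ to the partial product $p_i=z_1\cdots z_i$ and its complement $p_i^{-1}C=z_{i+1}\cdots z_r$, minimality forces $l(p_i)+l(p_i^{-1}C)=l(C)$, so $p_i$ lies on a geodesic from $\mathrm{id}$ to $C$ and is therefore of the form $\sigma_{\pi_i}$ for a unique noncrossing partition $\pi_i$, with $l(\sigma_{\pi_i})=b_i$. Then each factorization $\sigma_{\pi_i}=\sigma_{\pi_{i-1}}z_i$ is minimal with $z_i$ a cycle on $a_i$ elements, so Lemma~\ref{near} says that $\pi_{i-1}$ is obtained from $\pi_i$ by splitting one block into an interval or near interval partition with $a_i$ blocks. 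Since $\pi_0=\hat 0$ and $\pi_r=\hat 1$, this yields $(\pi_0,\dots,\pi_r)\in\mathcal N(\underline a)$ and defines the candidate inverse.

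That the two maps are mutually inverse is then immediate from the defining relations $\sigma_{\pi_i}=z_1\cdots z_i$ and $z_i=\sigma_{\pi_{i-1}}^{-1}\sigma_{\pi_i}$, which recover each datum from the other. The step I expect to be the main obstacle is the explicit verification that an interval or near interval split produces a single cycle of length $a_i$: this is precisely the converse of Lemma~\ref{near}, and the lemmas above supply the cycle $\Rightarrow$ split implication but not the split $\Rightarrow$ cycle one, so it must be obtained from the direct computation of $z_i$ on the block $B$.
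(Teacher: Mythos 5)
Your proof is correct and takes essentially the same route as the paper, whose entire proof is the single line ``This follows from Lemma~\ref{near}'': your backward direction (partial products lie on a geodesic from $id$ to $C$, hence are of the form $\sigma_{\pi_i}$, then apply Lemma~\ref{near}) is exactly the intended argument. Your one addition is a genuine point the paper leaves implicit --- Lemma~\ref{near} only gives the cycle~$\Rightarrow$~split implication, so the forward direction needs the direct check that an interval (or near interval) split yields a single $a_i$-cycle --- and your computation of $z_i$ as the cycle permuting the block maxima (suitably adjusted in the near interval case, where the element entering the cycle from the merged block is the maximum of its prefix part rather than of the whole block) correctly supplies it.
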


\begin{proof}
This follows from Lemma \ref{near}.
\end{proof}

\begin{defi} For a sequence $(\pi_0,\dots,\pi_r)\in \mathcal{N}(\underline a)$
we write $\pi_{i-1}\sqsubset \pi_i$ in the case where the blocks $B_1,\dots,B_{a_i}$ form an interval
partition of $B$.
The weight of $\Pi = (\pi_0,\dots,\pi_r) \in \mathcal{N}(\underline a)$ is
\[
  \wt( \Pi ) = \prod_{\substack{ 1\leq i \leq r  \\ \pi_{i-1} \nsqsubset \pi_i }} X_i.
\]
\end{defi}

Using the bijection in Proposition~\ref{propbijmn}, this permits to define the weight function on
$\mathcal{M}(\underline a)$ that was used in Equation~\eqref{maineq}.

\section{Proof of Theorem~\ref{mainth}}

\begin{defi}
For an $r$-tuple $\underline a = (a_1,\dots,a_r)$ such that $a_i\geq2$ for all $i$ and
satisfying the minimality condition $\sum_{i=1}^r (a_i-1)=n-1$, we define 
\[
  P_{ \underline a } ( X_1, \dots , X_{r-1} ) = \sum_{ \Pi \in\mathcal{N}( \underline a) }  \wt( \Pi ).
\]
For such a $r$-tuple $\underline a = (a_1,\dots,a_r)$, we also define $\underline a' = (a_1,\dots, a_{r-2}, a_{r-1}+a_r-1) $.
\end{defi}

Note that $\underline a'$ also satisfies the minimality condition. Our aim is to prove:

\begin{prop}
\begin{equation} \label{mainrec}
   P_{ \underline a } ( X_1, \dots , X_{r-1} ) = P_{ \underline a'  } ( X_1, \dots , X_{r-2} ) \times \big( (n-a_r)X_{r-1} + a_r \big).
\end{equation}
\end{prop}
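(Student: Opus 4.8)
The plan is to prove \eqref{mainrec} bijectively, by collapsing the two topmost refinement steps of a chain in $\mathcal{N}(\underline a)$ into the single top step of a chain in $\mathcal{N}(\underline a')$, and by showing that this collapsing map is $n$-to-one with the right weights. Concretely, for $\Pi=(\pi_0,\dots,\pi_r)\in\mathcal{N}(\underline a)$ the two top steps are the split $\hat 1=\pi_r\to\pi_{r-1}$ into $a_r$ blocks and the split $\pi_{r-1}\to\pi_{r-2}$ of one block into $a_{r-1}$ blocks; I want to merge them into a single split $\hat 1\to\pi'_{r-2}$ into $a_{r-1}+a_r-1$ blocks and obtain $\Pi'=(\pi_0,\dots,\pi_{r-3},\pi'_{r-2},\hat 1)\in\mathcal{N}(\underline a')$ (possibly after the rotation discussed below). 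On the factorization side of Proposition~\ref{propbijmn} this is the amalgamation of the last two factors $z_{r-1}z_r$ into one cycle of length $a_{r-1}+a_r-1$, which is length-consistent because $(a_{r-1}-1)+(a_r-1)=(a_{r-1}+a_r-1)-1$.

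The identity \eqref{mainrec} is then equivalent to the statement that, for each fixed $\Pi'\in\mathcal{N}(\underline a')$, the fibre of this map has weight-enumerator $\wt(\Pi')\cdot\big((n-a_r)X_{r-1}+a_r\big)$. Two things must hold: the map should multiply the part of the weight carried by $X_1,\dots,X_{r-2}$ by $1$ (so that it reads off $\wt(\Pi')$ unchanged), and the only new variable $X_{r-1}$ must record exactly whether the expansion of the top step is of \emph{interval type} or of \emph{near interval type}. Setting all $X_i=1$ this is just the numerical identity $|\mathcal{N}(\underline a)|=n\,|\mathcal{N}(\underline a')|$, i.e. $n^{r-1}=n\cdot n^{r-2}$, so each fibre has exactly $n$ elements, and the content of \eqref{mainrec} is the refined claim that $a_r$ of them carry weight $1$ while $n-a_r$ carry the extra factor $X_{r-1}$.

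The interval contributions are straightforward. When the top split $\hat 1\to\pi'_{r-2}$ of $\Pi'$ is an interval partition into $a_{r-1}+a_r-1$ consecutive blocks, the expansions for which the coarser partition $\pi_{r-1}$ is again an interval partition of $[1,n]$ are obtained by merging a run of $a_{r-1}$ \emph{cyclically} consecutive blocks of $\pi'_{r-2}$: among the $a_{r-1}+a_r-1$ such cyclic windows, exactly $a_r$ do not wrap past $n\mapsto 1$, and these produce expansions with both top steps of interval type, hence weight $1$. This accounts for the constant term $a_r$.

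The main obstacle is the remaining $n-a_r$ expansions, which must all contribute a single factor $X_{r-1}$. These are of two kinds: windows that wrap past $n\mapsto 1$, which make $\pi_{r-1}$ a near interval partition of the whole set, and expansions in which the inner split $\pi_{r-1}\to\pi_{r-2}$ is itself near interval, which take $\pi_{r-2}$ outside the class of interval-or-near-interval partitions of $[1,n]$ altogether. The delicate point is that these near interval expansions do not localise over a single $\Pi'$ with $\pi'_{r-2}=\pi_{r-2}$, so a naive fibrewise count is not uniform before one reorganises it. I would resolve this using the rotation $i\mapsto i+k\bmod n$ of Section~\ref{defs}, which turns near interval partitions into interval ones: it should be used to define the collapsing map on the wrapping and near interval cases, and one must check (i) that the resulting map is a genuine bijection onto $\mathcal{N}(\underline a')$, (ii) that it preserves the lower partitions and hence the $X_1,\dots,X_{r-2}$ part of the weight, and (iii) that each such expansion is recorded by exactly one factor $X_{r-1}$ (in particular that no expansion produces $X_{r-1}^2$), so that the near interval expansions total $(n-a_r)X_{r-1}$. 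Granting this, \eqref{mainrec} follows, and since $P_{(n)}=1$ it yields Theorem~\ref{mainth} by induction on $r$.
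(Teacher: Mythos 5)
Your overall architecture is the same as the paper's: collapse the two top steps into one, prove the fibrewise identity $\sum_{\Pi}\wt(\Pi)=\wt(\Gamma)\big((n-a_r)X_{r-1}+a_r\big)$ over each fibre, and your count of the $a_r$ interval-type expansions (the non-wrapping windows of $a_{r-1}$ cyclically consecutive blocks among the $a_{r-1}+a_r-1$ blocks) is exactly the paper's final lemma. But the mechanism you propose for the remaining $n-a_r$ elements of each fibre --- the rotation $i\mapsto i+k \bmod n$ --- provably cannot work, and this is the heart of the proof rather than a point to be ``checked''. Consider the paper's Case~5: $\pi_{r-1}=I\oplus a\oplus J$ is an interval partition and $\pi_{r-2}=I\oplus (b,c)\curvearrowright K\oplus J$, i.e.\ the inner split is of near interval type inside a block that is not all of $[1,n]$. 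Then $\pi_{r-2}$ contains a block which is a union of two intervals with nonempty gaps on \emph{both} sides (the gap $K$ is nonempty because the split is near interval, and $I\oplus J$ is nonempty because $\pi_{r-1}\neq\hat 1$), hence this block is not a cyclic interval. Every block of an interval or near interval partition of $[1,n]$ is a cyclic interval, and being a cyclic interval is invariant under rotation; therefore \emph{no} rotation of $\pi_{r-2}$ lies in the required class, and your collapsing map is undefined on such chains. The paper's Cases 4 and 7--10 exhibit the same obstruction. Moreover, even in cases where a rotation is available (e.g.\ to unwrap a near interval $\pi_{r-2}$), it fails your check (ii): a rotation whose cut falls inside a block of $\pi_{r-2}$ cyclically shifts the induced order on that block, turning interval splits at lower levels into near interval ones and vice versa, so the $X_1,\dots,X_{r-3}$ part of the weight is not preserved. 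There is also a subtler bookkeeping problem you do not address: in $\Gamma\in\mathcal{N}(\underline a')$ the factor $X_{r-2}$ records whether the top partition of $\Gamma$ is an interval partition of $[1,n]$, while in $\Pi$ it records whether $\pi_{r-2}\sqsubset\pi_{r-1}$, i.e.\ a condition relative to the induced order on the split block; these disagree already in the paper's Case~3, where the step $\pi_{r-1}\to\pi_{r-2}$ is of interval type but $\pi_{r-2}$ is a near interval partition of $[1,n]$, so even the ``no rotation needed'' cases require a correction.

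What the paper does instead is precisely the device your plan is missing: a permutation $\sigma$, defined by the ten-case analysis of Figure~\ref{bijcases}, which rearranges the blocks of $\pi_{r-2}$ \emph{as units} while being increasing on each block. Increasing-on-blocks preservation is what guarantees that every lower split keeps its type (the paper's first lemma in the proof), and the cases are calibrated so that $\sigma(\pi_{r-2})$ is an interval partition if and only if $\pi_{r-2}\sqsubset\pi_{r-1}$, which repairs the $X_{r-2}$ mismatch above. Bijectivity --- and with it the uniform fibre size $n$ --- is then obtained by recording a marker $\beta(\Pi)\in\{1,\dots,n\}$ and proving that $(\Psi,\beta)$ is invertible. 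Note also that your deduction of uniform fibres from the global count $n^{r-1}=n\cdot n^{r-2}$ is a non sequitur: equal totals do not force equal fibre sizes, and uniformity is part of what must be proved (you partly acknowledge this, but the rotation idea gives no way to establish it). In short, your proposal correctly locates the difficulty and states the right conditions (i)--(iii), but the tool offered to meet them fails on exactly the cases that matter, so the gap is essential.
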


Indeed, the formula in Theorem~\ref{mainth} immediately follows by induction (it is clear that $P_{(n)} = 1$).
Note that we have $b_{r-1} = n-a_r$ from the definition of $b_i$
and the condition $\sum_{i=1}^r (a_i-1) = n-1$.

In order to prove the previous proposition, we define a map $\Psi : \mathcal{N}(\underline a) \to \mathcal{N}(\underline a') $ 
such that
\begin{equation} \label{relwt}
   \sum_{\Pi \in \Psi^{-1}(\Gamma)  } \wt( \Pi ) = \wt(\Gamma) \times \big( (n-a_r)X_{r-1} + a_r \big)
\end{equation}
for any $\Gamma\in \mathcal{N}(\underline a ')$, and summing over $\Gamma$ proves \eqref{mainrec}.

Let $\Pi = (\pi_0,\dots,\pi_r)\in\mathcal{N}(\underline a)$ and 
$\Pi' = (\pi_0,\dots,\pi_{r-2} , \pi_r) $. Note that $\Pi'$ might not be an element of $\mathcal{N}(\underline a')$.
In general, $\Psi(\Pi)$ will have the form $\sigma(\Pi') = (\sigma(\pi_0),\dots,\sigma(\pi_{r-2}) , \sigma(\pi_r)) $ 
for some $\sigma\in\mathfrak{S}_n$.
Moreover we require that the restriction of $\sigma$ to each block $B$ of $\pi_{r-2}$ is increasing.
Indeed, under these conditions we have:
\begin{lemm}
Let $1\leq i\leq r-2$, then there holds $\sigma(\pi_{i-1}) \sqsubset \sigma(\pi_i)$ if and only if $\pi_{i-1} \sqsubset \pi_i$.
\end{lemm}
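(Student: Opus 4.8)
The plan is to show that the interval-versus-near-interval dichotomy recorded by $\sqsubset$ is a purely order-theoretic property of the block being split, and that the hypothesis on $\sigma$ guarantees it restricts to an order isomorphism on that block, so the property is preserved in both directions.

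First I would fix $i$ with $1\le i\le r-2$ and let $B$ be the block of $\pi_i$ that is split into $B_1,\dots,B_{a_i}$ when passing to $\pi_{i-1}$. Since the chain is increasing and $i\le r-2$, we have $\pi_i\le\pi_{r-2}$, so $B$ is contained in a single block $\tilde B$ of $\pi_{r-2}$. The hypothesis that $\sigma$ is increasing on $\tilde B$ then forces $\sigma$ to be increasing on the subset $B$ as well; in other words, $\sigma|_B$ is an order isomorphism from $B$ onto $\sigma(B)$, each equipped with the order induced from $\{1,\dots,n\}$.

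Next I would observe that $\sigma(\pi_{i-1})$ is obtained from $\sigma(\pi_i)$ by splitting the block $\sigma(B)$ into $\sigma(B_1),\dots,\sigma(B_{a_i})$, the remaining blocks being simply the images under $\sigma$ of the other blocks of $\pi_i$. By definition, $\pi_{i-1}\sqsubset\pi_i$ means precisely that $B_1,\dots,B_{a_i}$ form an interval partition of $B$, and likewise $\sigma(\pi_{i-1})\sqsubset\sigma(\pi_i)$ means that $\sigma(B_1),\dots,\sigma(B_{a_i})$ form an interval partition of $\sigma(B)$. The key point is that both notions entering the definition of $\mathcal{N}(\underline a)$ are intrinsic to a totally ordered set: being an interval partition asks that each block be an interval, and being a near interval partition is defined via the smallest and largest blocks of an underlying interval partition. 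An order isomorphism therefore carries interval partitions to interval partitions and near interval partitions to near interval partitions, and the same holds for its inverse.

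Applying this to $\sigma|_B$, the sub-blocks $\sigma(B_1),\dots,\sigma(B_{a_i})$ form an interval partition of $\sigma(B)$ if and only if $B_1,\dots,B_{a_i}$ form one of $B$, which is exactly the claimed equivalence. I expect the only delicate step to be the first one, namely checking that the block being split lies inside a single block of $\pi_{r-2}$ so that the increasing hypothesis on $\sigma$ can be invoked; once $\sigma|_B$ is known to be order-preserving, the conclusion is a formal consequence of the fact that $\sqsubset$ depends only on the induced total order of $B$.
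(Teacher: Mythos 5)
Your proof is correct and follows essentially the same route as the paper's: you note that the split block $B$ of $\pi_i$ lies inside a single block of $\pi_{r-2}$ because $\pi_i$ refines $\pi_{r-2}$, conclude that $\sigma$ restricts to an order isomorphism on $B$, and observe that the interval/near-interval dichotomy defining $\sqsubset$ is intrinsic to the induced total order and hence preserved in both directions. The only difference is that you spell out the order-isomorphism argument in more detail than the paper's three-line proof.
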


\begin{proof}
 The noncrossing partition $\pi_{i-1}$ is obtained from $\pi_i$ by splitting a block $B$.
 Since $\pi_i$ is a refinement of $\pi_{r-2}$, we have $B\subset C$ for some $C\in\pi_{r-2}$,
 so the restriction of $\sigma$ on $B$ is increasing.
 Since the order is preserved, the condition of being an interval or a near interval partition is
 preserved too.
\end{proof}

If $1\leq i \leq r-3$, the previous lemma will ensure that $\wt(\Pi)$ and $\wt(\Psi(\Pi))$ both 
contain, or both don't contain, a factor $X_i$ .

We will also define a map $\beta:\mathcal{N}(\underline a) \to \{1,\dots,n\}$ and we prove that
$\Pi \mapsto (\Psi(\Pi),\beta(\Pi))$ is a bijection from 
$\mathcal{N}(\underline a)$ to $\mathcal{N}(\underline a') \times \{1,\dots,n\}$.
In the pictures, we represent $\beta(\Pi)=i$ by drawing a vertical bar drawn between two integers $i-1$ 
and $i$ (if $2\leq i \leq n$) or to the left of $1$ (if $i=1$). See Figure~\ref{bijcases} for examples.

To define $\Psi$ and $\beta$, we use some notations to build noncrossing partitions. Let $\pi$ and $\rho$ 
be noncrossing partitions, and $i,j>0$, then

\begin{itemize}
\item $\pi^{[i]} $ is $\pi$ where all labels are shifted up by $i$.

\item $\pi \oplus \rho = \pi \cup \rho^{[j]}$ if $\pi \in \mathcal{NC}_j $.

\item $(i,j) \curvearrowright \pi =  \{ \{1\dots,i\} \cup \{n-j+1,\dots,n\} \} \cup \pi^{[i]} $.

\item when there is no ambiguity, an integer $i$ denote the one block partition of size $i$.

\end{itemize}

For example, a near interval partition with four blocks can be written $(a,b)\curvearrowright( c \oplus d \oplus e )$
where $a,b,c,d,e>0$.

Let $B$ denote the block of $\pi_{r-1}$ that splits in $\pi_{r-2}$. Moreover,
$I$, $J$, $K$ denote arbitrary interval partition, and $a$, $b$, $c$, etc. are integers (or one-block partitions).
Then the definition is the following (see also Figure~\ref{bijcases}):
\begin{itemize}
 \item Case 1: $\pi_{r-1}$ and $\pi_{r-2}$ are both interval partitions.

       Then $\sigma$ is the identity permutation, and $\beta(\Pi) = \min B$.

 \item Case 2: $\pi_{r-2}$ is an interval partition but $\pi_{r-1}$ is not.
 
       Then $\sigma$ is the identity permutation, and $\beta(\Pi) = n-b+1$ where $b$
       is such that we can write  $\pi_{r-1} = (a,b) \curvearrowright I $.
 
 \item Case 3: $\pi_{r-1} = (a,b) \curvearrowright (I\oplus c \oplus J)$
       and $\pi_{r-2} = (a,b) \curvearrowright (I\oplus K \oplus J)$.
       
       Then $\sigma(\pi_{r-2}) = I \oplus (a+b) \oplus K \oplus J$.
       (Although we do not define $\sigma$ explicitly, there is only one canonical choice.)
       And $\beta(\Pi)=\min B$.

 \item Case 4: $\pi_{r-1} = (a,b) \curvearrowright I$, and $\pi_{r-2} = J \oplus (a',b') \curvearrowright I \oplus K$
       with $0<a'\leq a$ and $0<b'\leq b$.
       
       Then $\sigma(\pi_{r-2}) = J \oplus I \oplus (a'+b') \oplus K$,
       and the bar is placed between the $a'$ first dots and $b'$ last dots of the block of size $a'+b'$.
       
 \item Case 5 : $\pi_{r-1} = I\oplus a \oplus J$ and $\pi_{r-2} = I\oplus (b,c)\curvearrowright K \oplus J$.
 
       Then $\sigma(\pi_{r-2}) = (b,c)\curvearrowright ( I \oplus K \oplus J)$. If $I$ is nonempty, the bar
       is placed to the left of its last block. Otherwise, the bar is placed in leftmost position ($\beta(\Pi)=1$).
  
 \item Case 6: $\pi_{r-1} = (a,b) \curvearrowright I$, 
       and $\pi_{r-2} = (a',b') \curvearrowright (J\oplus I \oplus K)$
       with $0<a'\leq a$ and $0<b'\leq b$.
 
       Then $\sigma$ is the identity, and the bar is placed between $I$ and $K$.
  
 \item Case 7: $\pi_{r-1} = (a,b) \curvearrowright (I\oplus c \oplus J)$, 
       and $\pi_{r-2} = (a,b) \curvearrowright (I\oplus (d,e)\curvearrowright K \oplus J)$.
 
       Then $\sigma(\pi_{r-2}) = (a,b) \curvearrowright (I\oplus (d+e) \oplus J \oplus K) $,
       and the bar is placed between the $d$ first dots and $e$ last dots of the block of size $d+e$.

 \item Case 8: $\pi_{r-1} = (a,b) \curvearrowright I$, 
       and $\pi_{r-2} = (a',b') \curvearrowright (J\oplus (d,e)\curvearrowright I \oplus K)$
       with $0 < a' \leq a$ and $0 < b' \leq b$.

       Then $\sigma(\pi_{r-2}) = (a',b') \curvearrowright (J\oplus I \oplus (d+e) \oplus K) $,
       and the bar is placed between the $d$ first dots and $e$ last dots of the block of size $d+e$.

 \item Case 9 and 10:  This is when $\pi_{r-2}$ contains a block $C$ which is a union of three intervals (and no less).
       Let $a$, $b$, $c$, denote the length of these intervals. The other blocks of $\pi_{r-2}$ are arranged
       as a union of two interval partitions $I$ and $J$ (from left to right). 
       
       If $\pi_{r-1}$ is obtained by joining $C$ with the blocks in $I$ (Case 9), then
       $\sigma(\pi_{r-2}) = (a+b,c) \curvearrowright (I\oplus J) $,
       and the bar is placed to the right of the $a$th dot.
       
       If $\pi_{r-1}$ is obtained by joining $C$ with the blocks in $J$ (Case 10), then
       $\sigma(\pi_{r-2}) = (a,b+c) \curvearrowright (I\oplus J) $,
       and the bar is placed between to the left of the $c$th dot starting from the right. 
 \end{itemize}

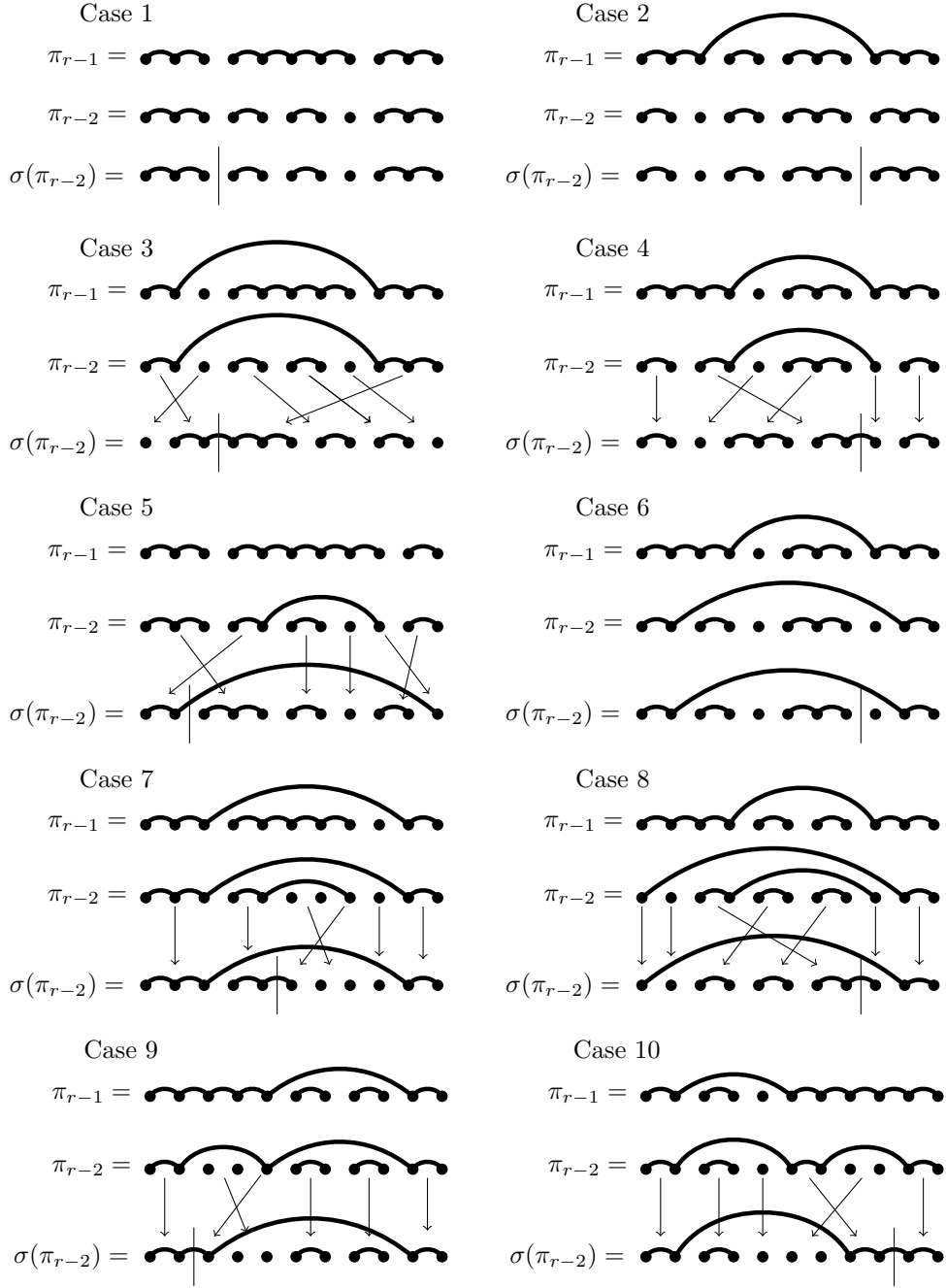
\begin{figure}[h!tp] \centering
 \begin{tikzpicture}
   \tikzstyle{edg} = [line width=0.6mm]
   \tikzstyle{ver} = [circle, draw, fill, inner sep=0.5mm]
   \node at (0,1.6) {Case 1};
   \node at (-1,0) {$\pi_{r-1}=$};
   \node[ver] at (1,0) {};
   \node[ver] at (2,0) {};
   \node[ver] at (3,0) {};
   \node[ver] at (4,0) {};
   \node[ver] at (5,0) {};
   \node[ver] at (6,0) {};
   \node[ver] at (7,0) {};
   \node[ver] at (8,0) {};
   \node[ver] at (9,0) {};
   \node[ver] at (10,0) {};
   \node[ver] at (11,0) {};
   \draw[edg] (1,0) to[bend left=60] (2,0);
   \draw[edg] (2,0) to[bend left=60] (3,0);
   \draw[edg] (9,0) to[bend left=60] (10,0);
   \draw[edg] (4,0) to[bend left=60] (5,0);
   \draw[edg] (5,0) to[bend left=60] (6,0);
   \draw[edg] (6,0) to[bend left=60] (7,0);   
   \draw[edg] (7,0) to[bend left=60] (8,0); 
   \draw[edg] (10,0) to[bend left=60] (11,0); 
  \begin{scope}[shift={(0,-2)}]
   \node at (-1,0) {$\pi_{r-2}=$};
   \node[ver] at (1,0) {};
   \node[ver] at (2,0) {};
   \node[ver] at (3,0) {};
   \node[ver] at (4,0) {};
   \node[ver] at (5,0) {};
   \node[ver] at (6,0) {};
   \node[ver] at (7,0) {};
   \node[ver] at (8,0) {};
   \node[ver] at (9,0) {};
   \node[ver] at (10,0) {};
   \node[ver] at (11,0) {};
   \draw[edg] (1,0) to[bend left=60] (2,0);
   \draw[edg] (2,0) to[bend left=60] (3,0);
   \draw[edg] (9,0) to[bend left=60] (10,0);
   \draw[edg] (4,0) to[bend left=60] (5,0);
   \draw[edg] (6,0) to[bend left=60] (7,0); 
   \draw[edg] (10,0) to[bend left=60] (11,0);   
  \end{scope}   
  \begin{scope}[shift={(0,-4)}]
   \node at (-1.7,0) {$\sigma(\pi_{r-2})=$};
   \node[ver] at (1,0) {};
   \node[ver] at (2,0) {};
   \node[ver] at (3,0) {};
   \node[ver] at (4,0) {};
   \node[ver] at (5,0) {};
   \node[ver] at (6,0) {};
   \node[ver] at (7,0) {};
   \node[ver] at (8,0) {};
   \node[ver] at (9,0) {};
   \node[ver] at (10,0) {};
   \node[ver] at (11,0) {};
   \draw[edg] (1,0) to[bend left=60] (2,0);
   \draw[edg] (2,0) to[bend left=60] (3,0);
   \draw[edg] (9,0) to[bend left=60] (10,0);
   \draw[edg] (4,0) to[bend left=60] (5,0);
   \draw[edg] (6,0) to[bend left=60] (7,0); 
   \draw[edg] (10,0) to[bend left=60] (11,0);   
   \draw      (3.5,1) to (3.5,-1);
   \end{scope}   
 \end{tikzpicture} 
 \hspace{0.5cm}
 \begin{tikzpicture}
   \tikzstyle{edg} = [line width=0.6mm]
   \tikzstyle{ver} = [circle, draw, fill, inner sep=0.5mm]
   \node at (0,1.6) {Case 2};
   \node at (-1,0) {$\pi_{r-1}=$};
   \node[ver] at (1,0) {};
   \node[ver] at (2,0) {};
   \node[ver] at (3,0) {};
   \node[ver] at (4,0) {};
   \node[ver] at (5,0) {};
   \node[ver] at (6,0) {};
   \node[ver] at (7,0) {};
   \node[ver] at (8,0) {};
   \node[ver] at (9,0) {};
   \node[ver] at (10,0) {};
   \node[ver] at (11,0) {};
   \draw[edg] (1,0) to[bend left=60] (2,0);
   \draw[edg] (2,0) to[bend left=60] (3,0);
   \draw[edg] (3,0) to[bend left=60] (9,0);
   \draw[edg] (9,0) to[bend left=60] (10,0);
   \draw[edg] (4,0) to[bend left=60] (5,0);
   \draw[edg] (6,0) to[bend left=60] (7,0);   
   \draw[edg] (7,0) to[bend left=60] (8,0); 
   \draw[edg] (10,0) to[bend left=60] (11,0); 
  \begin{scope}[shift={(0,-2)}]
   \node at (-1,0) {$\pi_{r-2}=$};
   \node[ver] at (1,0) {};
   \node[ver] at (2,0) {};
   \node[ver] at (3,0) {};
   \node[ver] at (4,0) {};
   \node[ver] at (5,0) {};
   \node[ver] at (6,0) {};
   \node[ver] at (7,0) {};
   \node[ver] at (8,0) {};
   \node[ver] at (9,0) {};
   \node[ver] at (10,0) {};
   \node[ver] at (11,0) {};
   \draw[edg] (1,0) to[bend left=60] (2,0);
   \draw[edg] (9,0) to[bend left=60] (10,0);
   \draw[edg] (4,0) to[bend left=60] (5,0);
   \draw[edg] (6,0) to[bend left=60] (7,0);   
   \draw[edg] (7,0) to[bend left=60] (8,0); 
   \draw[edg] (10,0) to[bend left=60] (11,0); 
  \end{scope}   
  \begin{scope}[shift={(0,-4)}]
   \node at (-1.7,0) {$\sigma(\pi_{r-2})=$};
   \node[ver] at (1,0) {};
   \node[ver] at (2,0) {};
   \node[ver] at (3,0) {};
   \node[ver] at (4,0) {};
   \node[ver] at (5,0) {};
   \node[ver] at (6,0) {};
   \node[ver] at (7,0) {};
   \node[ver] at (8,0) {};
   \node[ver] at (9,0) {};
   \node[ver] at (10,0) {};
   \node[ver] at (11,0) {};
   \draw[edg] (1,0) to[bend left=60] (2,0);
   \draw[edg] (9,0) to[bend left=60] (10,0);
   \draw[edg] (4,0) to[bend left=60] (5,0);
   \draw[edg] (6,0) to[bend left=60] (7,0);   
   \draw[edg] (7,0) to[bend left=60] (8,0); 
   \draw[edg] (10,0) to[bend left=60] (11,0); 
   \draw      (8.5,1) to (8.5,-1);
   \end{scope}   
 \end{tikzpicture} 
\vspace{0.2cm}
 
 \begin{tikzpicture}
   \tikzstyle{edg} = [line width=0.6mm]
   \tikzstyle{ver} = [circle, draw, fill, inner sep=0.5mm]
   \node at (0,1.6) {Case 3};
   \node at (-1,0) {$\pi_{r-1}=$};
   \node[ver] at (1,0) {};
   \node[ver] at (2,0) {};
   \node[ver] at (3,0) {};
   \node[ver] at (4,0) {};
   \node[ver] at (5,0) {};
   \node[ver] at (6,0) {};
   \node[ver] at (7,0) {};
   \node[ver] at (8,0) {};
   \node[ver] at (9,0) {};
   \node[ver] at (10,0) {};
   \node[ver] at (11,0) {};
   \draw[edg] (1,0) to[bend left=60] (2,0);
   \draw[edg] (2,0) to[bend left=60] (9,0);
   \draw[edg] (9,0) to[bend left=60] (10,0);
   \draw[edg] (4,0) to[bend left=60] (5,0);
   \draw[edg] (5,0) to[bend left=60] (6,0);   
   \draw[edg] (6,0) to[bend left=60] (7,0);   
   \draw[edg] (7,0) to[bend left=60] (8,0); 
   \draw[edg] (10,0) to[bend left=60] (11,0); 
  \begin{scope}[shift={(0,-2.5)}]
   \node at (-1,0) {$\pi_{r-2}=$};
   \node[ver] at (1,0) {};
   \node[ver] at (2,0) {};
   \node[ver] at (3,0) {};
   \node[ver] at (4,0) {};
   \node[ver] at (5,0) {};
   \node[ver] at (6,0) {};
   \node[ver] at (7,0) {};
   \node[ver] at (8,0) {};
   \node[ver] at (9,0) {};
   \node[ver] at (10,0) {};
   \node[ver] at (11,0) {};
   \draw[edg] (1,0) to[bend left=60] (2,0);
   \draw[edg] (2,0) to[bend left=60] (9,0);
   \draw[edg] (9,0) to[bend left=60] (10,0);
   \draw[edg] (4,0) to[bend left=60] (5,0);
   \draw[edg] (6,0) to[bend left=60] (7,0); 
   \draw[edg] (10,0) to[bend left=60] (11,0);   
  \end{scope}   
  \begin{scope}[shift={(0,-5.1)}]
   \draw[->] (1.5,2.3) to (2.5,0.7);
   \draw[->] (2.8,2.3) to (1.3,0.7);
   \draw[->] (4.7,2.3) to (6.6,0.7);
   \draw[->] (6.6,2.3) to (8.7,0.7);
   \draw[->] (6.6,2.3) to (8.7,0.7);
   \draw[->] (8.1,2.3) to (10.2,0.7);
   \draw[->] (9.8,2.3) to (5.8,0.7);
   \node at (-1.7,0) {$\sigma(\pi_{r-2})=$};
   \node[ver] at (1,0) {};
   \node[ver] at (2,0) {};
   \node[ver] at (3,0) {};
   \node[ver] at (4,0) {};
   \node[ver] at (5,0) {};
   \node[ver] at (6,0) {};
   \node[ver] at (7,0) {};
   \node[ver] at (8,0) {};
   \node[ver] at (9,0) {};
   \node[ver] at (10,0) {};
   \node[ver] at (11,0) {};
   \draw[edg] (2,0) to[bend left=60] (3,0);
   \draw[edg] (3,0) to[bend left=60] (4,0);
   \draw[edg] (4,0) to[bend left=60] (5,0);
   \draw[edg] (5,0) to[bend left=60] (6,0);
   \draw[edg] (7,0) to[bend left=60] (8,0);
   \draw[edg] (9,0) to[bend left=60] (10,0);   
   \draw      (3.5,1) to (3.5,-1);
   \end{scope}   
 \end{tikzpicture} 
  \hspace{0.5cm}
 \begin{tikzpicture}
   \tikzstyle{edg} = [line width=0.6mm]
   \tikzstyle{ver} = [circle, draw, fill, inner sep=0.5mm]
   \node at (0,1.6) {Case 4};
   \node at (-1,0) {$\pi_{r-1}=$};
   \node[ver] at (1,0) {};
   \node[ver] at (2,0) {};
   \node[ver] at (3,0) {};
   \node[ver] at (4,0) {};
   \node[ver] at (5,0) {};
   \node[ver] at (6,0) {};
   \node[ver] at (7,0) {};
   \node[ver] at (8,0) {};
   \node[ver] at (9,0) {};
   \node[ver] at (10,0) {};
   \node[ver] at (11,0) {};
   \draw[edg] (1,0) to[bend left=60] (2,0);
   \draw[edg] (2,0) to[bend left=60] (3,0);
   \draw[edg] (3,0) to[bend left=60] (4,0);
   \draw[edg] (4,0) to[bend left=60] (9,0);
   \draw[edg] (9,0) to[bend left=60] (10,0);
   \draw[edg] (10,0) to[bend left=60] (11,0); 
   \draw[edg] (6,0) to[bend left=60] (7,0);   
   \draw[edg] (7,0) to[bend left=60] (8,0); 
  \begin{scope}[shift={(0,-2.5)}]
   \node at (-1,0) {$\pi_{r-2}=$};
   \node[ver] at (1,0) {};
   \node[ver] at (2,0) {};
   \node[ver] at (3,0) {};
   \node[ver] at (4,0) {};
   \node[ver] at (5,0) {};
   \node[ver] at (6,0) {};
   \node[ver] at (7,0) {};
   \node[ver] at (8,0) {};
   \node[ver] at (9,0) {};
   \node[ver] at (10,0) {};
   \node[ver] at (11,0) {};
   \draw[edg] (1,0) to[bend left=60] (2,0);
   \draw[edg] (3,0) to[bend left=60] (4,0);
   \draw[edg] (4,0) to[bend left=60] (9,0);
   \draw[edg] (10,0) to[bend left=60] (11,0); 
   \draw[edg] (6,0) to[bend left=60] (7,0);   
   \draw[edg] (7,0) to[bend left=60] (8,0); 
  \end{scope}   
  \begin{scope}[shift={(0,-5.1)}]
   \draw[->] (1.5,2.3)  to (1.5,0.7);
   \draw[->] (3.6,2.3)  to (6.5,0.7);
   \draw[->] (4.8,2.3)  to (3.3,0.7);
   \draw[->] (6.8,2.3)  to (5.3,0.7);
   \draw[->] (10.5,2.3) to (10.5,0.7);
   \draw[->] (9,2.3)    to (9,0.7);
   \node at (-1.7,0) {$\sigma(\pi_{r-2})=$};
   \node[ver] at (1,0) {};
   \node[ver] at (2,0) {};
   \node[ver] at (3,0) {};
   \node[ver] at (4,0) {};
   \node[ver] at (5,0) {};
   \node[ver] at (6,0) {};
   \node[ver] at (7,0) {};
   \node[ver] at (8,0) {};
   \node[ver] at (9,0) {};
   \node[ver] at (10,0) {};
   \node[ver] at (11,0) {};
   \draw[edg] (1,0) to[bend left=60] (2,0);
   \draw[edg] (4,0) to[bend left=60] (5,0);
   \draw[edg] (5,0) to[bend left=60] (6,0);
   \draw[edg] (7,0) to[bend left=60] (8,0);
   \draw[edg] (8,0) to[bend left=60] (9,0);
   \draw[edg] (10,0) to[bend left=60] (11,0);
   \draw      (8.5,1) to (8.5,-1);
   \end{scope}   
 \end{tikzpicture} 
\vspace{0.2cm}
 
  \begin{tikzpicture}
   \tikzstyle{edg} = [line width=0.6mm]
   \tikzstyle{ver} = [circle, draw, fill, inner sep=0.5mm]
   \node at (0,1.6) {Case 5};
   \node at (-1,0) {$\pi_{r-1}=$};
   \node[ver] at (1,0) {};
   \node[ver] at (2,0) {};
   \node[ver] at (3,0) {};
   \node[ver] at (4,0) {};
   \node[ver] at (5,0) {};
   \node[ver] at (6,0) {};
   \node[ver] at (7,0) {};
   \node[ver] at (8,0) {};
   \node[ver] at (9,0) {};
   \node[ver] at (10,0) {};
   \node[ver] at (11,0) {};
   \draw[edg] (1,0) to[bend left=60] (2,0);
   \draw[edg] (2,0) to[bend left=60] (3,0);
   \draw[edg] (4,0) to[bend left=60] (5,0);
   \draw[edg] (5,0) to[bend left=60] (6,0);
   \draw[edg] (6,0) to[bend left=60] (7,0);   
   \draw[edg] (7,0) to[bend left=60] (8,0); 
   \draw[edg] (8,0) to[bend left=60] (9,0); 
   \draw[edg] (10,0) to[bend left=60] (11,0); 
  \begin{scope}[shift={(0,-2.5)}]
   \node at (-1,0) {$\pi_{r-2}=$};
   \node[ver] at (1,0) {};
   \node[ver] at (2,0) {};
   \node[ver] at (3,0) {};
   \node[ver] at (4,0) {};
   \node[ver] at (5,0) {};
   \node[ver] at (6,0) {};
   \node[ver] at (7,0) {};
   \node[ver] at (8,0) {};
   \node[ver] at (9,0) {};
   \node[ver] at (10,0) {};
   \node[ver] at (11,0) {};
   \draw[edg] (1,0) to[bend left=60] (2,0);
   \draw[edg] (2,0) to[bend left=60] (3,0);
   \draw[edg] (4,0) to[bend left=60] (5,0);
   \draw[edg] (5,0) to[bend left=60] (9,0); 
   \draw[edg] (6,0) to[bend left=60] (7,0); 
   \draw[edg] (10,0) to[bend left=60] (11,0);   
  \end{scope}   
  \begin{scope}[shift={(0,-5.5)}]
   \draw[->] (2.2,2.7)  to (3.7,0.7);
   \draw[->] (4.3,2.7)  to (1.8,0.7);
   \draw[->] (6.5,2.7)  to (6.5,0.7);
   \draw[->] (  8,2.7)  to (  8,0.7);
   \draw[->] (9.2,2.7)  to (10.7,0.7);
   \draw[->] (10.3,2.7) to (9.8,0.5);
   \node at (-1.7,0) {$\sigma(\pi_{r-2})=$};
   \node[ver] at (1,0) {};
   \node[ver] at (2,0) {};
   \node[ver] at (3,0) {};
   \node[ver] at (4,0) {};
   \node[ver] at (5,0) {};
   \node[ver] at (6,0) {};
   \node[ver] at (7,0) {};
   \node[ver] at (8,0) {};
   \node[ver] at (9,0) {};
   \node[ver] at (10,0) {};
   \node[ver] at (11,0) {};
   \draw[edg] (1,0) to[bend left=60] (2,0);
   \draw[edg] (2,0) to[bend left=40] (11,0);
   \draw[edg] (3,0) to[bend left=60] (4,0);
   \draw[edg] (4,0) to[bend left=60] (5,0);
   \draw[edg] (9,0) to[bend left=60] (10,0);
   \draw[edg] (6,0) to[bend left=60] (7,0); 
   \draw      (2.5,1) to (2.5,-1);
   \end{scope}   
 \end{tikzpicture}
\hspace{0.5cm}
 \begin{tikzpicture}
   \tikzstyle{edg} = [line width=0.6mm]
   \tikzstyle{ver} = [circle, draw, fill, inner sep=0.5mm]
   \node at (0,1.6) {Case 6};
   \node at (-1,0) {$\pi_{r-1}=$};
   \node[ver] at (1,0) {};
   \node[ver] at (2,0) {};
   \node[ver] at (3,0) {};
   \node[ver] at (4,0) {};
   \node[ver] at (5,0) {};
   \node[ver] at (6,0) {};
   \node[ver] at (7,0) {};
   \node[ver] at (8,0) {};
   \node[ver] at (9,0) {};
   \node[ver] at (10,0) {};
   \node[ver] at (11,0) {};
   \draw[edg] (1,0) to[bend left=60] (2,0);
   \draw[edg] (2,0) to[bend left=60] (3,0);
   \draw[edg] (3,0) to[bend left=60] (4,0);
   \draw[edg] (4,0) to[bend left=60] (9,0);
   \draw[edg] (9,0) to[bend left=60] (10,0);
   \draw[edg] (10,0) to[bend left=60] (11,0); 
   \draw[edg] (6,0) to[bend left=60] (7,0);   
   \draw[edg] (7,0) to[bend left=60] (8,0); 
  \begin{scope}[shift={(0,-2.5)}]
   \node at (-1,0) {$\pi_{r-2}=$};
   \node[ver] at (1,0) {};
   \node[ver] at (2,0) {};
   \node[ver] at (3,0) {};
   \node[ver] at (4,0) {};
   \node[ver] at (5,0) {};
   \node[ver] at (6,0) {};
   \node[ver] at (7,0) {};
   \node[ver] at (8,0) {};
   \node[ver] at (9,0) {};
   \node[ver] at (10,0) {};
   \node[ver] at (11,0) {};
   \draw[edg] (1,0) to[bend left=60] (2,0);
   \draw[edg] (3,0) to[bend left=60] (4,0);
   \draw[edg] (2,0) to[bend left=40] (10,0);
   \draw[edg] (10,0) to[bend left=60] (11,0); 
   \draw[edg] (6,0) to[bend left=60] (7,0);   
   \draw[edg] (7,0) to[bend left=60] (8,0); 
  \end{scope}   
  \begin{scope}[shift={(0,-5.5)}]
   \node at (-1.7,0) {$\sigma(\pi_{r-2})=$};
   \node[ver] at (1,0) {};
   \node[ver] at (2,0) {};
   \node[ver] at (3,0) {};
   \node[ver] at (4,0) {};
   \node[ver] at (5,0) {};
   \node[ver] at (6,0) {};
   \node[ver] at (7,0) {};
   \node[ver] at (8,0) {};
   \node[ver] at (9,0) {};
   \node[ver] at (10,0) {};
   \node[ver] at (11,0) {};
   \draw[edg] (1,0) to[bend left=60] (2,0);
   \draw[edg] (3,0) to[bend left=60] (4,0);
   \draw[edg] (2,0) to[bend left=40] (10,0);
   \draw[edg] (10,0) to[bend left=60] (11,0); 
   \draw[edg] (6,0) to[bend left=60] (7,0);   
   \draw[edg] (7,0) to[bend left=60] (8,0); 
   \draw      (8.5,1) to (8.5,-1);
   \end{scope}   
 \end{tikzpicture} 
\vspace{0.2cm}
 
 \begin{tikzpicture}
   \tikzstyle{edg} = [line width=0.6mm]
   \tikzstyle{ver} = [circle, draw, fill, inner sep=0.5mm]
   \node at (0,1.6) {Case 7};
   \node at (-1,0) {$\pi_{r-1}=$};
   \node[ver] at (1,0) {};
   \node[ver] at (2,0) {};
   \node[ver] at (3,0) {};
   \node[ver] at (4,0) {};
   \node[ver] at (5,0) {};
   \node[ver] at (6,0) {};
   \node[ver] at (7,0) {};
   \node[ver] at (8,0) {};
   \node[ver] at (9,0) {};
   \node[ver] at (10,0) {};
   \node[ver] at (11,0) {};
   \draw[edg] (1,0) to[bend left=60] (2,0);
   \draw[edg] (2,0) to[bend left=60] (3,0);
   \draw[edg] (3,0) to[bend left=40] (10,0);
   \draw[edg] (10,0) to[bend left=60] (11,0); 
   \draw[edg] (4,0) to[bend left=60] (5,0);   
   \draw[edg] (5,0) to[bend left=60] (6,0);   
   \draw[edg] (6,0) to[bend left=60] (7,0);   
   \draw[edg] (7,0) to[bend left=60] (8,0); 
  \begin{scope}[shift={(0,-2.5)}]
   \node at (-1,0) {$\pi_{r-2}=$};
   \node[ver] at (1,0) {};
   \node[ver] at (2,0) {};
   \node[ver] at (3,0) {};
   \node[ver] at (4,0) {};
   \node[ver] at (5,0) {};
   \node[ver] at (6,0) {};
   \node[ver] at (7,0) {};
   \node[ver] at (8,0) {};
   \node[ver] at (9,0) {};
   \node[ver] at (10,0) {};
   \node[ver] at (11,0) {};
   \draw[edg] (1,0) to[bend left=60] (2,0);
   \draw[edg] (2,0) to[bend left=60] (3,0);
   \draw[edg] (3,0) to[bend left=40] (10,0);
   \draw[edg] (10,0) to[bend left=60] (11,0);
   \draw[edg] (4,0) to[bend left=60] (5,0); 
   \draw[edg] (5,0) to[bend left=40] (8,0);   
  \end{scope}   
  \begin{scope}[shift={(0,-5.5)}]
   \node at (-1.7,0) {$\sigma(\pi_{r-2})=$};
   \draw[->] (2,2.7)  to (2,0.7);
   \draw[->] (4.5,2.7) to (4.5,1.2);
   \draw[->] (6.55,2.7)  to (7.3,0.7);
   \draw[->] (9,2.7)  to (9,1);
   \draw[->] (7.8,2.7)  to (6.3,0.7);
   \draw[->] (10.5,2.7)  to (10.5,0.9);
   \node[ver] at (1,0) {};
   \node[ver] at (2,0) {};
   \node[ver] at (3,0) {};
   \node[ver] at (4,0) {};
   \node[ver] at (5,0) {};
   \node[ver] at (6,0) {};
   \node[ver] at (7,0) {};
   \node[ver] at (8,0) {};
   \node[ver] at (9,0) {};
   \node[ver] at (10,0) {};
   \node[ver] at (11,0) {};
   \draw[edg] (1,0) to[bend left=60] (2,0);
   \draw[edg] (2,0) to[bend left=60] (3,0);
   \draw[edg] (3,0) to[bend left=40] (10,0);
   \draw[edg] (4,0) to[bend left=60] (5,0);
   \draw[edg] (5,0) to[bend left=60] (6,0);
   \draw[edg] (10,0) to[bend left=60] (11,0); 
   \draw      (5.5,1) to (5.5,-1);
   \end{scope}   
 \end{tikzpicture} 
\hspace{0.5cm}
 \begin{tikzpicture}
   \tikzstyle{edg} = [line width=0.6mm]
   \tikzstyle{ver} = [circle, draw, fill, inner sep=0.5mm]
   \node at (0,1.6) {Case 8};
   \node at (-1,0) {$\pi_{r-1}=$};
   \node[ver] at (1,0) {};
   \node[ver] at (2,0) {};
   \node[ver] at (3,0) {};
   \node[ver] at (4,0) {};
   \node[ver] at (5,0) {};
   \node[ver] at (6,0) {};
   \node[ver] at (7,0) {};
   \node[ver] at (8,0) {};
   \node[ver] at (9,0) {};
   \node[ver] at (10,0) {};
   \node[ver] at (11,0) {};
   \draw[edg] (1,0) to[bend left=60] (2,0);
   \draw[edg] (2,0) to[bend left=60] (3,0);
   \draw[edg] (3,0) to[bend left=60] (4,0);
   \draw[edg] (4,0) to[bend left=60] (9,0);
   \draw[edg] (5,0) to[bend left=60] (6,0);
   \draw[edg] (7,0) to[bend left=60] (8,0); 
   \draw[edg] (9,0) to[bend left=60] (10,0); 
   \draw[edg] (10,0) to[bend left=60] (11,0); 
  \begin{scope}[shift={(0,-2.5)}]
   \node at (-1,0) {$\pi_{r-2}=$};
   \node[ver] at (1,0) {};
   \node[ver] at (2,0) {};
   \node[ver] at (3,0) {};
   \node[ver] at (4,0) {};
   \node[ver] at (5,0) {};
   \node[ver] at (6,0) {};
   \node[ver] at (7,0) {};
   \node[ver] at (8,0) {};
   \node[ver] at (9,0) {};
   \node[ver] at (10,0) {};
   \node[ver] at (11,0) {};
   \draw[edg] (1,0) to[bend left=40] (10,0);
   \draw[edg] (3,0) to[bend left=60] (4,0);
   \draw[edg] (4,0) to[bend left=40] (9,0);
   \draw[edg] (5,0) to[bend left=60] (6,0); 
   \draw[edg] (7,0) to[bend left=60] (8,0); 
   \draw[edg] (10,0) to[bend left=60] (11,0); 
  \end{scope}   
  \begin{scope}[shift={(0,-5.5)}]
   \node at (-1.7,0) {$\sigma(\pi_{r-2})=$};
   \draw[->] (1,2.7) to (1,0.7);
   \draw[->] (2,2.7) to (2,1);
   \draw[->] (3.6,2.7) to (7,0.7);
   \draw[->] (5.3,2.7) to (3.8,0.7);
   \draw[->] (7.3,2.7) to (5.8,0.7);
   \draw[->] (9,2.7)   to ( 9,1);
   \draw[->] (10.5,2.7) to (10.5,0.7);
   \node[ver] at (1,0) {};
   \node[ver] at (2,0) {};
   \node[ver] at (3,0) {};
   \node[ver] at (4,0) {};
   \node[ver] at (5,0) {};
   \node[ver] at (6,0) {};
   \node[ver] at (7,0) {};
   \node[ver] at (8,0) {};
   \node[ver] at (9,0) {};
   \node[ver] at (10,0) {};
   \node[ver] at (11,0) {};
   \draw[edg] (1,0) to[bend left=40] (10,0);
   \draw[edg] (10,0) to[bend left=40] (11,0);
   \draw[edg] (3,0) to[bend left=60] (4,0);
   \draw[edg] (5,0) to[bend left=60] (6,0);
   \draw[edg] (7,0) to[bend left=60] (8,0);
   \draw[edg] (8,0) to[bend left=60] (9,0);
   \draw      (8.5,1) to (8.5,-1);
   \end{scope}   
 \end{tikzpicture} 
\vspace{0.2cm}
 
 \begin{tikzpicture}
   \tikzstyle{edg} = [line width=0.6mm]
   \tikzstyle{ver} = [circle, draw, fill, inner sep=0.5mm]
   \node at (0,1.6) {Case 9};
   \node at (-1,0) {$\pi_{r-1}=$};
   \node[ver] at (1,0) {};
   \node[ver] at (2,0) {};
   \node[ver] at (3,0) {};
   \node[ver] at (4,0) {};
   \node[ver] at (5,0) {};
   \node[ver] at (6,0) {};
   \node[ver] at (7,0) {};
   \node[ver] at (8,0) {};
   \node[ver] at (9,0) {};
   \node[ver] at (10,0) {};
   \node[ver] at (11,0) {};
   \draw[edg] (1,0) to[bend left=60] (2,0);
   \draw[edg] (2,0) to[bend left=60] (3,0);
   \draw[edg] (3,0) to[bend left=60] (4,0);
   \draw[edg] (4,0) to[bend left=60] (5,0);
   \draw[edg] (5,0) to[bend left=40] (10,0);
   \draw[edg] (10,0) to[bend left=60] (11,0); 
   \draw[edg] (6,0) to[bend left=60] (7,0);
   \draw[edg] (8,0) to[bend left=60] (9,0);
  \begin{scope}[shift={(0,-2.5)}]
   \node at (-1,0) {$\pi_{r-2}=$};
   \node[ver] at (1,0) {};
   \node[ver] at (2,0) {};
   \node[ver] at (3,0) {};
   \node[ver] at (4,0) {};
   \node[ver] at (5,0) {};
   \node[ver] at (6,0) {};
   \node[ver] at (7,0) {};
   \node[ver] at (8,0) {};
   \node[ver] at (9,0) {};
   \node[ver] at (10,0) {};
   \node[ver] at (11,0) {};
   \draw[edg] (1,0) to[bend left=60] (2,0);
   \draw[edg] (2,0) to[bend left=60] (5,0);
   \draw[edg] (5,0) to[bend left=40] (10,0);
   \draw[edg] (10,0) to[bend left=60] (11,0); 
   \draw[edg] (6,0) to[bend left=60] (7,0);   
   \draw[edg] (8,0) to[bend left=60] (9,0);
  \end{scope}   
  \begin{scope}[shift={(0,-5.5)}]
   \node at (-1.7,0) {$\sigma(\pi_{r-2})=$};
   \draw[->] (1.5,2.7)  to (1.5,0.7);
   \draw[->] (3.55,2.7)  to (4.3,0.85);
   \draw[->] (4.8,2.8)  to (3.2,0.7);
   \draw[->] (6.5,2.7)  to (6.5,0.7);
   \draw[->] (8.5,2.7)  to (8.5,0.7);
   \draw[->] (10.5,2.7)  to (10.5,0.9);
   \node[ver] at (1,0) {};
   \node[ver] at (2,0) {};
   \node[ver] at (3,0) {};
   \node[ver] at (4,0) {};
   \node[ver] at (5,0) {};
   \node[ver] at (6,0) {};
   \node[ver] at (7,0) {};
   \node[ver] at (8,0) {};
   \node[ver] at (9,0) {};
   \node[ver] at (10,0) {};
   \node[ver] at (11,0) {};
   \draw[edg] (1,0) to[bend left=60] (2,0);
   \draw[edg] (2,0) to[bend left=60] (3,0);
   \draw[edg] (3,0) to[bend left=40] (10,0);
   \draw[edg] (6,0) to[bend left=60] (7,0);
   \draw[edg] (8,0) to[bend left=60] (9,0);
   \draw[edg] (10,0) to[bend left=60] (11,0); 
   \draw      (2.5,1) to (2.5,-1);
   \end{scope}   
 \end{tikzpicture} 
\hspace{0.5cm}
 \begin{tikzpicture}
   \tikzstyle{edg} = [line width=0.6mm]
   \tikzstyle{ver} = [circle, draw, fill, inner sep=0.5mm]
   \node at (0,1.6) {Case 10};
   \node at (-1,0) {$\pi_{r-1}=$};
   \node[ver] at (1,0) {};
   \node[ver] at (2,0) {};
   \node[ver] at (3,0) {};
   \node[ver] at (4,0) {};
   \node[ver] at (5,0) {};
   \node[ver] at (6,0) {};
   \node[ver] at (7,0) {};
   \node[ver] at (8,0) {};
   \node[ver] at (9,0) {};
   \node[ver] at (10,0) {};
   \node[ver] at (11,0) {};
   \draw[edg] (1,0) to[bend left=60] (2,0);
   \draw[edg] (2,0) to[bend left=40] (6,0);
   \draw[edg] (3,0) to[bend left=60] (4,0);
   \draw[edg] (6,0) to[bend left=60] (7,0); 
   \draw[edg] (7,0) to[bend left=60] (8,0); 
   \draw[edg] (8,0) to[bend left=60] (9,0); 
   \draw[edg] (9,0) to[bend left=60] (10,0); 
   \draw[edg] (10,0) to[bend left=60] (11,0); 
  \begin{scope}[shift={(0,-2.5)}]
   \node at (-1,0) {$\pi_{r-2}=$};
   \node[ver] at (1,0) {};
   \node[ver] at (2,0) {};
   \node[ver] at (3,0) {};
   \node[ver] at (4,0) {};
   \node[ver] at (5,0) {};
   \node[ver] at (6,0) {};
   \node[ver] at (7,0) {};
   \node[ver] at (8,0) {};
   \node[ver] at (9,0) {};
   \node[ver] at (10,0) {};
   \node[ver] at (11,0) {};
   \draw[edg] (1,0) to[bend left=60] (2,0);
   \draw[edg] (2,0) to[bend left=60] (6,0);
   \draw[edg] (3,0) to[bend left=60] (4,0);
   \draw[edg] (6,0) to[bend left=60] (7,0);
   \draw[edg] (7,0) to[bend left=60] (10,0);
   \draw[edg] (10,0) to[bend left=60] (11,0);
  \end{scope}   
  \begin{scope}[shift={(0,-5.5)}]
   \node at (-1.7,0) {$\sigma(\pi_{r-2})=$};
   \draw[->] (1.5,2.7) to (1.5,0.7);
   \draw[->] (3.5,2.7) to (3.5,0.7);
   \draw[->] (5,2.7) to (5,0.7);
   \draw[->] (6.6,2.7) to (8.2,0.7);
   \draw[->] (8.4,2.7) to (6.7,0.7);
   \draw[->] (10.5,2.7) to (10.5,0.7);
   \node[ver] at (1,0) {};
   \node[ver] at (2,0) {};
   \node[ver] at (3,0) {};
   \node[ver] at (4,0) {};
   \node[ver] at (5,0) {};
   \node[ver] at (6,0) {};
   \node[ver] at (7,0) {};
   \node[ver] at (8,0) {};
   \node[ver] at (9,0) {};
   \node[ver] at (10,0) {};
   \node[ver] at (11,0) {};
   \draw[edg] (1,0) to[bend left=60] (2,0);
   \draw[edg] (2,0) to[bend left=60] (8,0);
   \draw[edg] (3,0) to[bend left=60] (4,0);
   \draw[edg] (8,0) to[bend left=60] (9,0);
   \draw[edg] (9,0) to[bend left=60] (10,0);
   \draw[edg] (10,0) to[bend left=60] (11,0);
   \draw      (9.5,1) to (9.5,-1);
   \end{scope}   
 \end{tikzpicture}  
 \caption{The map $\Psi$.\label{bijcases}}
\end{figure}

We can check that $\pi_{r-2} \sqsubset \pi_{r-1} $ if and only if $\sigma(\pi_{r-2})$ is an 
interval partition (both conditions are true in cases 1--4, but none is true in cases 5--10).
This means that $\wt(\Pi)$ and $\wt(\Psi(\Pi))$ both contain or don't contain a factor $X_{r-2}$.
So at this point, we have proved that $\wt(\Psi(\Pi)) = \wt(\Pi) $ or $\wt(\Psi(\Pi)) = \wt(\Pi) \times X_{r-1} $.

\begin{lemm}
 Suppose we know $\sigma(\pi_{r-2})$ and the location of the bar, then we can deduce which one of 
 the 10 cases was applied.
\end{lemm}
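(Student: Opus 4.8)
The plan is to exhibit an explicit decision procedure that reads the case off the pair $(\sigma(\pi_{r-2}),\beta(\Pi))$, and to certify by inspection of the ten cases (following Figure~\ref{bijcases}) that this procedure is unambiguous. The coarsest invariant is the type of $\sigma(\pi_{r-2})$ itself: since $\sigma(\pi_r)=\hat 1$, the partition $\sigma(\pi_{r-2})$ is exactly the image of the last splitting step of $\Gamma=\Psi(\Pi)\in\mathcal{N}(\underline a')$, hence it is an interval or a near interval partition of $\{1,\dots,n\}$ with $a_{r-1}+a_r-1$ blocks. As already noted just before the statement, $\sigma(\pi_{r-2})$ is an interval partition precisely in cases 1--4 and a near interval partition precisely in cases 5--10, and these two types are mutually exclusive; so this invariant alone cleaves the analysis into two groups.

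For the interval group I would use as second invariant whether the bar (which marks a gap between two consecutive positions, or sits to the left of $1$) falls at the left boundary of a block of $\sigma(\pi_{r-2})$ or strictly inside a block, splitting one of its runs into two nonempty pieces. A boundary bar occurs in cases 1--2 and an interior bar in cases 3--4. Cases 1 and 2 are then handled uniformly: here $\sigma$ is the identity, so $\sigma(\pi_{r-2})=\pi_{r-2}$, and the preimage $\pi_{r-1}$ is recovered by merging, cyclically and starting from the block whose left boundary carries the bar, exactly $a_{r-1}$ consecutive blocks; this merge either stays inside $\{1,\dots,n\}$, giving an interval $\pi_{r-1}$ (case 1), or wraps past $n$, giving a near interval $\pi_{r-1}$ (case 2), and which of the two happens is forced by the bar position together with the known value $a_{r-1}$. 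Cases 3 and 4 are separated by the location of the interior block relative to the remaining blocks, as encoded in the normal forms $I\oplus(a+b)\oplus K\oplus J$ and $J\oplus I\oplus(a'+b')\oplus K$.

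For the near interval group I would refine according to where the bar lies relative to the unique wrap around block of $\sigma(\pi_{r-2})$. If the bar lies strictly inside the wrap around block, we are in case 9 or 10, distinguished by whether the bar lies in the first run (case 9, normal form $(a+b,c)\curvearrowright(I\oplus J)$) or the second run (case 10, normal form $(a,b+c)\curvearrowright(I\oplus J)$). If the bar lies strictly inside an ordinary, non wrapping block, we are in case 7 or 8, to be separated by the position of that block relative to $I,J,K$ and to the wrap around block in $(a,b)\curvearrowright(I\oplus(d+e)\oplus J\oplus K)$ versus $(a',b')\curvearrowright(J\oplus I\oplus(d+e)\oplus K)$. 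Finally, if the bar lies at a block boundary we are in case 5 or 6, told apart by whether that boundary abuts the wrap around block (case 6, where $\sigma$ is the identity and the bar separates $I$ from $K$) or lies strictly inside the leading interval partition $I$ (case 5, bar to the left of the last block of $I$, or in leftmost position when $I$ is empty).

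The main obstacle is not any single distinction but the bookkeeping required to certify that these ten readings are jointly exhaustive and pairwise incompatible on \emph{every} admissible pair $(\sigma(\pi_{r-2}),\beta(\Pi))$. I expect the delicate points to be exactly the pairs producing the same coarse type, namely 7 versus 8 (interior bar in an ordinary block) and 5 versus 6 (boundary bar in a near interval partition), where the data local to the bar does not by itself settle the case and one must exploit the global normal form of $\sigma(\pi_{r-2})$ together with the fixed parameters $a_{r-1}$ and $a_r$. Once this verification is carried through case by case, the case applied is uniquely determined, which is precisely the assertion of the lemma, and it is the step that will let us invert $\Psi$ on each fibre in the sequel.
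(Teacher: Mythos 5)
Your coarse decomposition coincides with the paper's: first split according to whether $\sigma(\pi_{r-2})$ is an interval or a near interval partition (cases 1--4 versus 5--10), then according to whether the bar sits at a block boundary or strictly inside a block. Your treatment of 9 versus 10 (bar in the left or right run of the wrap-around block) and of 1 versus 2 (merge $a_{r-1}$ consecutive blocks cyclically starting at the bar and see whether the merge wraps past $n$) is correct, and the latter is equivalent to the paper's test. The gap is in the remaining pairs 3/4, 5/6, 7/8, which is exactly where the content of the lemma lies. For 3/4 and 7/8 you appeal to ``the normal forms'' $I\oplus(a+b)\oplus K\oplus J$ versus $J\oplus I\oplus(a'+b')\oplus K$, but this is circular: both normal forms describe the same kind of object, namely a partition with one marked block containing the bar, and the letters $I,J,K$ are not recoverable from the data, so no decision procedure has actually been given. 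You concede as much yourself (``once this verification is carried through \dots''), which defers the lemma rather than proving it.

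Moreover, your stated criterion for 5 versus 6 is false. You claim case 6 occurs exactly when the bar abuts the wrap-around block. In the paper's own Figure~\ref{bijcases}, case 6, the bar separates the two ordinary blocks $\{6,7,8\}$ and $\{9\}$ and touches no element of the wrap block $\{1,2,10,11\}$; conversely in case 5, when $I$ consists of a single block (as in that same figure), the bar sits immediately to the right of the left arm of the wrap block, so it does abut it. Your test therefore misclassifies both of the paper's illustrating examples. The missing idea --- which is the paper's third criterion --- is quantitative: count the blocks of $\sigma(\pi_{r-2})$ lying \emph{entirely to the right} of the bar and compare this number with $a_{r-1}$; it is $\geq a_{r-1}$ in cases 1, 3, 5 and $< a_{r-1}$ in cases 2, 4, 6, 8, which, combined with the two coarse criteria, resolves the eight remaining cases. (Your instinct that 7 versus 8 is the most delicate pair is sound: when the part $J$ of case 7 is empty this count drops to $a_{r-1}-1$, and one must instead compare the number of blocks strictly between the wrap block and the marked block with $a_r-1$; but this is still a block count against the known parameters, not a reading of normal forms.) In short, the fine distinctions can only be made by letting $a_{r-1}$ and $a_r$ enter through explicit block counts, and your proposal never produces these counts.
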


\begin{proof}
Suppose for example that $\sigma(\pi_{r-2})$ is a near interval partition, and the bar is between $i$ and $i+1$ where both 
of these integers are in the non-interval block of $\sigma(\pi_{r-2})$. 
By examining the various cases in Figure~\ref{bijcases}, we see that we are in case 9 or 10 depending on whether 
the bar is on the left or right part of this non-interval block. It remains only to distinguish cases 1--8.

\begin{itemize}
 \item We can separate cases 1--4 from 5--8 by seeing whether $\sigma(\pi_{r-2})$ is an interval partition, or not.
 \item We can separate cases 3,4,7,8 from cases 1,2,5,6 by seeing whether the bar is between two integers that
       are in the same block of $\sigma(\pi_{r-2})$, or not. 
 \item We can separate cases 1,3,5,7 from 
       cases 2,4,6,8 by seeing whether the number of blocks of $\sigma(\pi_{r-2})$ entirely to the right of the 
       bar is $\geq a_{r-1}$ or $ < a_{r-1}$.
\end{itemize}
So the 3 criterions permits to distinguish the $2^3=8$ remaining cases.
\end{proof}

The previous lemma implies that $(\Psi,\beta)$ is a bijection, because in each given case we can recover
$\pi_{r-2}$ and $\pi_{r-1}$ from $\sigma(\pi_{r-2})$ and the location of the bar.
So for a given $\Gamma\in\mathcal{N}(\underline a')$, the $n$ elements in $\Psi^{-1}(\Gamma)$
can be obtained from the $n$ possible locations of the bar.

In order to get the factor $\big( (n-a_r)X_{r-1} + a_r \big)$ in \eqref{relwt}, it remains to prove 
the following : 

\begin{lemm}
If $\sigma(\pi_{r-2})$ is given, among the $n$ possible locations 
of the bar, there are exactly $a_r$ that result in $\pi_{r-1}$ being an interval partition.
\end{lemm}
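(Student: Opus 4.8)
The plan is to read the statement directly off the explicit description of the ten cases defining $\Psi^{-1}$ in Figure~\ref{bijcases}: first determine in which cases the reconstructed $\pi_{r-1}$ is an interval partition, and then count the bar positions producing those cases. Since the $n$ preimages of a fixed $\Gamma$ correspond bijectively to the $n$ bar positions (previous lemma), it suffices to count reconstructions with $\pi_{r-1}$ interval.

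First I would record the shape of $\pi_{r-1}$ in each case. In Cases 2,3,4,6,7,8 one has $\pi_{r-1}=(a,b)\curvearrowright(\cdots)$, whose distinguished block is a union of a prefix and a suffix of $\{1,\dots,n\}$; in Cases 9,10 the block of $\pi_{r-1}$ obtained by joining $C$ with $I$ (resp. $J$) has the same wrap-around shape. Thus in all these cases $\pi_{r-1}$ is a genuine near interval partition, and $\pi_{r-1}$ is an interval partition exactly in Case 1 (where $\pi_{r-1}$ is built by merging consecutive blocks) and in Case 5 (where $\pi_{r-1}=I\oplus a\oplus J$). Since $\sigma(\pi_{r-2})$ is an interval partition in Case 1 and a near interval partition in Case 5, for a given $\sigma(\pi_{r-2})$ only one of these two families can occur.

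Next I would carry out the count, using that $\sigma(\pi_{r-2})$ has $m:=a_{r-1}+a_r-1$ blocks, being obtained from $\hat1$ by the unique top split of the chain in $\mathcal N(\underline a')$. If $\sigma(\pi_{r-2})$ is an interval partition we are necessarily in Case 1, with $\sigma=\mathrm{id}$ and $\pi_{r-2}=\sigma(\pi_{r-2})$; recovering $\pi_{r-1}$ amounts to choosing which $a_{r-1}$ consecutive blocks of $\pi_{r-2}$ are merged into the split block $B$, and there are $m-a_{r-1}+1=a_r$ such windows, each giving a distinct bar $\beta(\Pi)=\min B$. If $\sigma(\pi_{r-2})$ is a near interval partition we are necessarily in Case 5; its unique non-interval block determines $b,c$, and the remaining $m-1$ interval blocks constitute the middle segment, which a Case 5 reconstruction slices as $I\oplus K\oplus J$ with $|K|=a_{r-1}-1$. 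Such a reconstruction is then determined by $p:=|I|\in\{0,1,\dots,a_r-1\}$ (forcing $|J|=a_r-1-p$), again giving $a_r$ reconstructions, with pairwise distinct bar positions since distinct $p$ place the bar to the left of distinct blocks. In both situations the number of bars producing an interval $\pi_{r-1}$ is exactly $a_r$, as claimed.

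The listing that Cases 2,3,4,6,7,8,9,10 all yield a near interval $\pi_{r-1}$ is routine once the shapes are written down. I expect the main obstacle to be the bookkeeping in Case 5: one has to verify that reading off $b,c$ from the wrap-around block of $\sigma(\pi_{r-2})$ and slicing the middle segment as $I\oplus K\oplus J$ reproduces, after applying the rotation $\sigma^{-1}$, an admissible pair $(\pi_{r-2},\pi_{r-1})\in\mathcal N(\underline a)$ with $\pi_{r-1}$ interval, and that the $a_r$ values of $p$ correspond to $a_r$ genuinely distinct and admissible bar positions. Once this is checked, summing over $\Gamma$ yields the factor $(n-a_r)X_{r-1}+a_r$ of \eqref{relwt}.
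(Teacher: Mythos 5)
Your proof is correct and takes essentially the same route as the paper: it identifies Cases 1 and 5 (according to whether $\sigma(\pi_{r-2})$ is an interval or a near interval partition) as the only cases producing an interval $\pi_{r-1}$, and then counts exactly $a_r$ admissible reconstructions in each, which is precisely the paper's argument phrased via windows/slicings instead of the paper's parametrization by bar positions lying just to the left of a block with at least $a_{r-1}$ blocks to their right. The admissibility concern you raise at the end is already discharged by the bijectivity of $(\Psi,\beta)$ from the preceding lemma, which both you and the paper invoke.
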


\begin{proof}
Suppose first that $\sigma(\pi_{r-2})$ is an interval partition. The only case where
$\pi_{r-1}$ is an interval partition is case 1. This means that the bar need to be located 
just to the left of some block, and need to have at least $a_{r-1}$ blocks to its right. 
Since the number of blocks in $\sigma(\pi_{r-2})$ is $a_{r-1}+a_{r}-1$, there are $a_r$ possible locations.

Now suppose that $\sigma(\pi_{r-2})$ is not an interval partition. The only case where
$\pi_{r-1}$ is an interval partition is case 5. Once again this means that the bar need to be located 
just to the left of some block, and need to have at least $a_{r-1}$ blocks to its right. 
In this case too we get $a_r$ possible locations.
\end{proof}

This completes the proof of Equation~\eqref{relwt}. As explained earlier, we deduce Theorem~\ref{mainth}.

\section{A hook formula for labelled trees}

\label{hookf}

In this section, we are in the particular case $\underline a=(2,\dots,2)$ where there are $n$ 2's (this corresponds
to counting transposition factorizations in $\mathfrak{S}_{n+1}$).
We give a mutivariate version of Postnikov's hook length formula \cite[Corollary~17.3]{postnikov} as a consequence
of Theorem~\ref{mainth}. The interpretation of this hook length formula using noncrossing chains has 
been made by the second author in \cite[Section~5]{josuat}, and along the same lines it gives the 
multivariate version we present here. 

Note that another mutivariate hook length formula generalizing Postnikov's
was obtained by Féray and Goulden \cite{feraygoulden}. It seems unrelated to ours (in the sense that
one would easily implies the other), but it is likely that their methods can give another
proof of our Theorem~\ref{hookformula} below.

Let us define a polynomial
\[
  P_n (X_0,\dots,X_{n-1})  = \sum_{\Pi\in\mathcal{N}(\underline a)} \wt(\Pi).
\]
By Theorem~\ref{mainth}, it is equal to $\prod_{i=1}^{n-1} \big( iX_i + (n+1-i) \big)$. Note that $P_n$
is considered as an $n$-variable polynomial. Introducing the (seemingly useless) variable $X_0$ 
makes more convenient to write the next lemma.

\begin{lemm} \label{lemmrecpn}
Let $n\geq2$, then we have
\begin{equation} \label{eqrecpn}
   P_n (X_0,\dots,X_{n-1}) = \frac{(n-1)X_{n-1}+2}{2} \sum  P_{\# I} (X_{i_1},X_{i_2},\dots ) P_{\# J}(X_{j_1},X_{j_2},\dots)
\end{equation}
where the sum is over $I$, $J$ such that $\{0,\dots,n-2\} = I \uplus J$,
and $I=\{i_1,i_2,\dots\}$, $J=\{j_1,j_2,\dots\}$. 
This is a recursion whose initial case is $P_1(X_0)=1$.
\end{lemm}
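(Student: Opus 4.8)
The plan is to obtain \eqref{eqrecpn} directly from the definition $P_m=\sum_{\Pi}\wt(\Pi)$, the sum running over maximal chains $\Pi=(\hat0=\pi_0<\dots<\pi_m=\hat1)$ of $NC_{m+1}$ (the case $\underline a=(2,\dots,2)$), by decomposing a chain of $NC_{n+1}$ according to its coatom $\pi_{n-1}$. I use that, with the normalization of this section (for which $X_0$ is inert), the $j$-th step of a chain carries the variable $X_{j-1}$: the bottom step always splits a block into singletons, hence is an interval split and carries $X_0$, while the top step $\pi_{n-1}<\hat1$ carries $X_{n-1}$. The top step contributes the factor $1$ if $\pi_{n-1}$ is an interval partition and $X_{n-1}$ if it is a near interval partition, according to whether the two-block split of $\{1,\dots,n+1\}$ wraps around or not.

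First I would factor the lower part of the chain. If $\{B_1,B_2\}$ are the blocks of $\pi_{n-1}$, the standard isomorphism $[\hat0,\pi_{n-1}]\cong NC_{B_1}\times NC_{B_2}$ identifies a maximal chain of $[\hat0,\pi_{n-1}]$ with an interleaving of a maximal chain of $NC_{B_1}$ and one of $NC_{B_2}$, and the interval/near interval type of each lower step is intrinsic to the block it refines; so the weight is the product of the top weight and the weights of the two subchains. The delicate point—which I expect to be the main obstacle—is the bookkeeping of variables: the interleaving is recorded by a splitting $I\uplus J=\{0,\dots,n-2\}$ of the indices of the lower variables, and since the shuffle is order preserving, the $k$-th step of the $B_1$-subchain carries the $k$-th smallest element of $I$. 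This is exactly the substitution turning the generating polynomial of the chains of $NC_{B_1}$ (a noncrossing lattice on $|B_1|$ points, hence enumerated by $P_{|B_1|-1}$) into $P_{\#I}(X_I)$, where $X_I$ lists the variables $X_i$, $i\in I$, in increasing order of $i$. Summing over subchains and interleavings, a coatom whose blocks have sizes $(b_1,b_2)$ contributes the shuffle sum $\sum_{\#I=b_1-1}P_{\#I}(X_I)P_{\#J}(X_J)$; for the wrapping block of a near interval coatom one uses that it is a single cyclic arc, so its lattice is again standard.

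Next I would sum over coatoms. Put $A_p=\sum_{\#I=p}P_{\#I}(X_I)P_{\#J}(X_J)$, the inner sum over $I\uplus J=\{0,\dots,n-2\}$ with $\#I=p$; exchanging $I$ and $J$ gives $A_p=A_{n-1-p}$. There is exactly one interval coatom whose left block has size $j$, for each $j=1,\dots,n$, and it contributes $A_{j-1}$; hence the interval coatoms contribute $\sum_{p=0}^{n-1}A_p$, which is the full sum $\sum_{I\uplus J}P_{\#I}(X_I)P_{\#J}(X_J)$. A near interval coatom is determined by an interior middle block, and there are exactly $n-s$ of them with middle block of size $s$; each contributes $A_{s-1}$, so the near interval coatoms contribute $\sum_{p=0}^{n-2}(n-1-p)A_p$. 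The identity $\sum_p(n-1-p)A_p=\tfrac{n-1}{2}\sum_p A_p$ is immediate from $A_p=A_{n-1-p}$, so this equals $\tfrac{n-1}{2}\sum_{I\uplus J}P_{\#I}(X_I)P_{\#J}(X_J)$.

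Combining the two types with their top-step weights gives
\[
  P_n=\Big(1+\tfrac{n-1}{2}X_{n-1}\Big)\sum_{I\uplus J=\{0,\dots,n-2\}}P_{\#I}(X_I)P_{\#J}(X_J),
\]
which is precisely \eqref{eqrecpn}; the base case $P_1(X_0)=1$ is clear. Apart from the variable bookkeeping highlighted above, the one auxiliary fact needed to license the two coatom sums is the classification of the coatoms of $NC_{n+1}$: there are exactly $n$ interval partitions (the prefix splits $\{1,\dots,j\}\mid\{j+1,\dots,n+1\}$) and $\binom{n}{2}$ near interval partitions (an interior interval together with its wrapping complement), the latter numbering $n-s$ for each middle-block size $s$. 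This is elementary but is what makes the two displayed sums exact.
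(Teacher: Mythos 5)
Your proof is correct, and its core is the same as the paper's: you decompose a maximal chain at its coatom $\pi_{n-1}$, use the isomorphism $[\hat 0,\pi_{n-1}]\cong NC_B\times NC_C$ to split the lower part of the chain into two subchains shuffled according to a partition $I\uplus J=\{0,\dots,n-2\}$ of the variable indices, and observe that the interval/near-interval type of each lower step is intrinsic to the block it refines, so that the lower part contributes $P_{\#I}(X_{i_1},X_{i_2},\dots)P_{\#J}(X_{j_1},X_{j_2},\dots)$; this is precisely the paper's map $\Pi\mapsto(B,C,I,J,\Pi_1,\Pi_2)$. The only genuine divergence is in how the sum over coatoms is then evaluated. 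The paper distinguishes one of the two blocks (dividing by $2$ at the end) and groups the ordered pairs $(B,C)$ by block sizes: for fixed sizes these pairs form a single orbit of cardinality $n+1$ under the cyclic rotation of $\{1,\dots,n+1\}$, exactly two members of which are interval partitions, and this yields the factor $(n-1)X_{n-1}+2$ in one stroke, with no further identity needed. You instead enumerate the unordered coatoms explicitly ($n$ interval ones, and $n-s$ near-interval ones with middle block of size $s$) and then invoke the symmetry $A_p=A_{n-1-p}$ to convert $\sum_{p}(n-1-p)A_p$ into $\tfrac{n-1}{2}\sum_p A_p$. The two mechanisms are equivalent --- your symmetrization of $A_p$ plays exactly the role of the paper's rotation-orbit count combined with its division by $2$ --- so what your version buys is a completely explicit coatom census (including the count $\binom{n}{2}$ of near-interval coatoms), while the paper's version explains the coefficient $n-1$ structurally as the orbit size $n+1$ minus its two interval members.
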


\begin{proof}
Let $\Pi=(\pi_0,\dots,\pi_{n})\in\mathcal{M}(\underline a)$, and denote by $B$ and $C$ the two blocks of $\pi_{n-1}$.
Because of the symmetry, it is convenient to say that $B$ is a distinguished block of $\pi_{n-1}$, and
after computing the generating function of such objects we divide the result by 2.
Then we consider:
\begin{align}
  \Pi_1 &= (\pi_0|_B, \dots, \pi_{n-1}|_B) , \\
  \Pi_2 &= (\pi_0|_C, \dots, \pi_{n-1}|_C) ,
\end{align}
where $\pi|_B = \{ X\in\pi \,: \, X\subset B \}$. 
Let $\Pi_1'$ (respectively, $\Pi_2'$) be what we obtain after removing the repeated entries in $\Pi_1$
(respectively, $\Pi_2$).
To encode the location of repeated entries we define:
\begin{align*}
   I &= \{ i \, : \,  \pi_{i}|_B =  \pi_{i+1}|_B  \}, \\
   J &= \{ i \, : \,  \pi_{i}|_C =  \pi_{i+1}|_C  \}
\end{align*}
We have $I\uplus J = \{0,\dots,n-2\} $, moreover $\Pi_1'$ (respectively $\Pi_2'$) is a maximal chain of $NC_B$
(respectively $NC_C$). These properties follows the fact that the interval $[\hat 0, \pi_{n-1} ]$ is
isomorphic to $NC_B\times NC_C$.

The map $\Pi \mapsto (B,C,I,J,\Pi_1,\Pi_2) $ is bijective and proves combinatorially Equation~\eqref{eqrecpn}.
Indeed, for fixed $B$ and $C$ we get the sum over $I$ and $J$.
To get the factor $(n-1)X_{n-1}+2$, observe that when $\#I$ and $\#J$ are fixed, all the possible $B$ and $C$
are obtained from each other by the cyclic rotation through $\{1,\dots,n+1\}$, and there are two interval
partitions in this orbit.
It remains only to divide by 2 for symmetry reasons as mentionned above.
\end{proof}

The recursion in the previous lemma is conveniently interpreted in terms of trees. See the discussion 
at the end of this section for more information about this particular kind of trees.

\begin{defi}
An \emph{André tree} on $n$ vertices is a labelled tree such that
\begin{itemize}
 \item each internal vertex has either one or two unordered descendants,
 \item vertices are labelled with integers from $1$ to $n$, decreasingly from the root to the leaves.
\end{itemize}
We denote $\mathcal{A}_n$ the set of André trees with $n$ vertices.
If $1\leq i \leq n$, we denote by $h_i(T)$ the {\it hook length} of the vertex with label $i$ in $T$,
i.e. the number of vertices below the vertex with label $i$ (including itself).
The \emph{weight} of an André tree $T\in\mathcal{T}_n$ is defined as
\[
    \wt(T) =  \prod_{\substack{ 1 \leq i \leq n  \\   h_i(T) >1  }}  \bigg( X_{i-1} \big(h_i(T) -1\big) +2 \bigg).
\]
\end{defi}

For example, the André trees on $4$ vertices are in Figure~\ref{tree5}.

\tikzstyle{every node}=[circle, draw, inner sep = 0.5mm, font=\footnotesize]
\tikzstyle{level}=[sibling distance = 7mm]

\tikzset{every picture/.style={scale=1}}
\begin{figure}[h!tp]  \centering
\begin{tikzpicture}[level distance = 6mm]
\node {4}
   child {node {3}
     child {node {2}
       child {node {1}
       }
     }
   };
\end{tikzpicture}
\hspace{1cm}
\begin{tikzpicture}[level distance = 6mm]
\node {4}
   child {node {3}
     child {node {2}
     }
     child {node {1}
     }
   } ;
\end{tikzpicture}
\hspace{1cm}
\begin{tikzpicture}[level distance = 6mm]
\node {4}
   child {node {3}
   }
   child {node {2}
     child {node {1}
     }
   };
\end{tikzpicture}
\hspace{1cm}
\begin{tikzpicture}[level distance = 6mm]
\node {4}
   child {node {2}
   }
   child {node {3}
     child {node {1}
     }
   };
\end{tikzpicture}
\hspace{1cm}
\begin{tikzpicture}[level distance = 6mm]
\node {4}
   child {node {1}
   }
   child {node {3}
     child {node {2}
     }
   };
\end{tikzpicture}
\caption{The André trees with 4 vertices. \label{tree5}}
\end{figure}
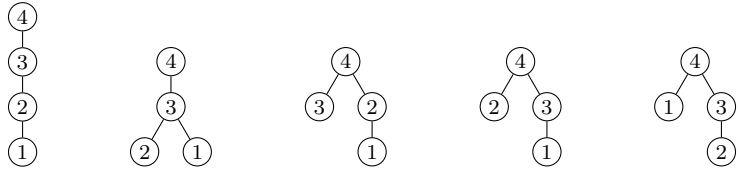

\begin{theo} \label{hookformula}
We have:
\[
  \sum_{ T \in \mathcal{T}_{n} }  \wt(T)  = \prod_{i=1}^{n-1} \big(  iX_i +   (n+1-i)  \big).
\] 
\end{theo}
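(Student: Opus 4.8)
The plan is to prove the identity by showing that the generating polynomial
$A_n := \sum_{T\in\mathcal{A}_n}\wt(T)$ satisfies exactly the recursion of Lemma~\ref{lemmrecpn}, with the same initial condition. Since $P_n$ satisfies that recursion and the recursion expresses the degree-$n$ polynomial in terms of polynomials of strictly smaller index, uniqueness forces $A_n = P_n$, and then $A_n = \prod_{i=1}^{n-1}\big(iX_i+(n+1-i)\big)$ is precisely the claim. So the whole proof reduces to recovering the recursion combinatorially from the tree side.

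First I would extract the root. Because labels decrease from the root to the leaves, the root of any $T\in\mathcal{A}_n$ carries the maximal label $n$, and its hook length is $h_n(T)=n$; hence $\wt(T)$ always contains the factor $X_{n-1}(n-1)+2$, which is the prefactor of Lemma~\ref{lemmrecpn} up to the symmetry constant $2$. Next I would decompose $T$ at the root: deleting the root leaves either one subtree or two unordered subtrees, whose vertex sets partition $\{1,\dots,n-1\}$. The hook length of any non-root vertex is the same in $T$ as in the subtree containing it, so $\wt(T)$ factors as the root weight times the product of the subtree weights. A subtree supported on a label set $S=\{s_1<\dots<s_k\}$ becomes, after the order-preserving relabeling $s_j\mapsto j$, an ordinary André tree on $k$ vertices; and since the factor attached to the vertex $s_j$ is $X_{s_j-1}\big(h-1\big)+2$, summing over all such subtrees yields $A_k\big(X_{s_1-1},\dots,X_{s_k-1}\big)=A_{\#I}(X_{i_1},\dots)$ with $I=\{s-1:s\in S\}\subseteq\{0,\dots,n-2\}$.

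To reach the symmetric form I would pass to ordered pairs of label sets. Writing $S_1\uplus S_2=\{1,\dots,n-1\}$, equivalently $I\uplus J=\{0,\dots,n-2\}$ with $I=S_1-1$ and $J=S_2-1$, each genuine two-children tree (for which $S_1\neq S_2$ automatically) is produced by the two ordered pairs $(S_1,S_2)$ and $(S_2,S_1)$, while each single-child tree is produced by the two ordered pairs in which one set is empty, using the convention $A_0=1$. In both situations dividing by $2$ restores the correct count, and for $n\geq2$ the set $\{0,\dots,n-2\}$ is nonempty, so the empty–empty term (no child at all) never contributes. This yields
\[
  A_n(X_0,\dots,X_{n-1}) = \frac{(n-1)X_{n-1}+2}{2}\sum_{I\uplus J=\{0,\dots,n-2\}} A_{\#I}(X_{i_1},X_{i_2},\dots)\,A_{\#J}(X_{j_1},X_{j_2},\dots),
\]
which is the recursion of Lemma~\ref{lemmrecpn}, the initial case being $A_1(X_0)=1$ since the one-vertex tree has empty weight. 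Comparing with that lemma gives $A_n=P_n$ and hence the theorem.

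The only genuinely delicate point is the bookkeeping in the last step: one must check that the factor $\tfrac12$ together with $A_0=1$ reproduces both the one-child and the two-children configurations from a single symmetric sum over ordered pairs, and that the order-preserving relabeling sends the weight of a subtree on $S$ to $A_{\#I}$ evaluated at the variables $X_{s-1}$ rather than at the standard variables. Everything else — the root factor and the multiplicativity of $\wt$ over the subtrees below the root — is immediate from the definition of hook length.
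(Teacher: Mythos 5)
Your proof is correct and takes essentially the same route as the paper's: extract the root factor $(n-1)X_{n-1}+2$, split the rest of the tree into an ordered pair of relabelled Andr\'e trees summed over $I\uplus J=\{0,\dots,n-2\}$, divide by $2$, and identify the result with the recursion of Lemma~\ref{lemmrecpn}. The only difference is that you carefully supply the bookkeeping (the label shift $S\mapsto S-1$, the convention $A_0=1$ handling one-child roots, and the weight-preserving relabelling) that the paper compresses into ``we omit details.''
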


\begin{proof}
We show that the left hand side satisfies the same recursion as $P_n$, as given in Lemma~\ref{lemmrecpn}.
The result is clear for $n=1$, so let $n\geq 2$.

For each $T \in \mathcal{T}_n$, the contribution of the root to $\wt(T)$ is a factor $(n-1)X_{n-1}+2$,
since its hook length is $n$. The rest of the tree is an unordered pair $\{U,V\}$ of trees,
one of them being possibly empty (in the case where the root has only one descendant). 
We can rather consider ordered pairs $(U,V)$ and divide the result by 2 at the end, since $U$ and $V$ 
can always be distinguished by the labels they contain. This gives the factor $\frac{(n-2)X_{n-2}+2}{2}$.

The labels of $U$ and $V$ form a partition $\{1,\dots,n-2 \} = I \uplus J$, and
the tree $U$ (respectively, $V$) is an element of $\mathcal{A}_{\#I}$ 
(respectively, $\mathcal{A}_{\#J}$) with an appropriate relabelling.
This permits to write the recursion. We omit details.
\end{proof}

For example, the hook formula with $n=4$ is as follows 
(where the five terms are in the order coming from the trees in Figure~\ref{tree5}):
\begin{align*}
  & (2+3X_3)(2+2X_2)(2+X_1) + (2+3X_3)(2+2X_2) \\
  & + (2+3X_3)(2+X_1) + (2+3X_3)(2+X_2) + (2+3X_3)(2+X_2)  \\
  = & \; (X_1+4)(2X_2+3)(3X_3+2).
\end{align*}

André trees were first introduced by Foata and Schützenberger \cite{foata}, who showed that 
$\# \mathcal{A}_n$ is the $n$th Euler number $E_n$ (which can be defined as the number 
of alternating permutations in $\mathfrak{S}_n$).
They were used by Stanley \cite{stanley} to show that $E_n$ is the number 
of orbits for the action of $\mathfrak{S}_{n+1}$ on maximal chains of set partitions
on $\{1,\dots,n+1\}$, so they were not unexpected in the present context.
Stanley's bijection explains why proving the above recursion is essentially
the same on chains of partitions or on André trees. But it also gives a fully 
combinatorial interpretation of the hook formula as follows.
Although $\mathcal{N}(\underline a)$ is not stable under the action of $\sigma_{n+1}$,
we can consider the equivalence relation $\sim$ on 
$\mathcal{N}(\underline a)$ defined by $\Pi_1\sim \Pi_2$ if there is $\sigma\in\mathfrak{S}_n$
such that $\sigma(\Pi_1)=\Pi_2$. It is easy to see that each orbit of maximal chains of partitions
contains a chain of noncrossing partitions, so the equivalence classes are indexed by André trees.
And the generating function of the equivalence class of index $T$ is $\wt(T)$. So the
hook express the fact that equivalence classes form a partition of $\mathcal{N}(\underline a)$.

We end this section by an open question. It would be very interesting if the recursion in 
Lemma~\ref{lemmrecpn} could be solved in a direct way leading to
$P_n(X_0,\dots,X_{n-2}) = \prod_{i=1}^{n-2} \big(  iX_i +   (n-i)  \big)$.
It would give an alternative proof of our multivariate hook formula, 
or equivalently, of the transposition case of Theorem~\ref{mainth}
The methods of \cite{feraygoulden} are quite likely to apply for this kind of problem.

\section{Final chains of noncrossing partitions}
\label{secfinalchains}

In this section, we present another interesting consequence of Theorem~\ref{mainth}.
We are still in the transposition case ($r=n-1$ and $a_i=2$ for all $i$).

\begin{defi}
A {\it final chain} of length $k$ in $NC_n$ is a $k$-tuple
of elements $(\pi_{n-k} , \dots , \pi_{n-1})$ such that $\pi_{n-k} \lessdot \dots \lessdot \pi_{n-1} = \hat 1$.
The weight of such a chain $\Pi= (\pi_{n-k} , \dots , \pi_{n-1})$ is
\[
  \wt( \Pi ) = \prod_{\substack{ n-k \leq i \leq n-2  \\ \pi_{i} \nsqsubset \pi_i+1 }} X_i.
\]
\end{defi}

It follows from the results of Krattenthaler and Müller \cite{krattenthaler} that
the number of final chains  of length $k$ in $NC_n$ is $n^{k-2} \binom{n}{k}$.
A multivariate analog can be obtained from Theorem~\ref{mainth}.

\begin{coro}
We have
\[
    \sum \wt(\Pi) =  \frac 1n \binom{n}{k} \prod_{i=n-k}^{n-2} \big( iX_i + (n-i) \big)
\]
where the sum is over final chains of length $k$ in $NC_n$.
\end{coro}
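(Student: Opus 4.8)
The plan is to obtain the product $\prod_{i=n-k}^{n-2}\big(iX_i+(n-i)\big)$ from Theorem~\ref{mainth} applied to a single well-chosen type, and then to account for the prefactor $\tfrac1n\tbinom nk$ by a cyclic enumeration. A final chain $(\pi_{n-k},\dots,\pi_{n-1})$ is the upper part of a maximal chain of $NC_n$; its minimum $\pi_{n-k}$ has $k$ blocks, and the weight records only which of the top $k-1$ steps are non-interval, through the variables $X_{n-k},\dots,X_{n-2}$. I would first treat the final chains whose minimum $\pi_{n-k}$ is of type $(n-k+1,1^{k-1})$, that is, one block of size $n-k+1$ together with $k-1$ singletons. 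Prepending the unique interval split of the large block into singletons identifies these chains bijectively with $\mathcal N(\underline a)$ for $\underline a=(n-k+1,2,\dots,2)$ (with $k-1$ twos): that extra bottom step is an interval split, hence it carries no variable and leaves the weight unchanged. Here $b_i=n-k+i-1$, so Theorem~\ref{mainth} together with the relabelling $X_i\mapsto X_{n-k-1+i}$ yields
\[
  \sum_{\substack{F\ \text{final of length }k\\ \pi_{n-k}\ \text{of type}\ (n-k+1,1^{k-1})}}\wt(F)=\prod_{i=n-k}^{n-2}\big(iX_i+(n-i)\big).
\]

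It then remains to show that the sum of $\wt$ over \emph{all} final chains of length $k$ equals $\tfrac1n\tbinom nk$ times the special sum just computed. Reading off the coefficient of each monomial $\prod_{i\in S}X_i$, this is the assertion that the number of final chains whose non-interval steps are exactly those indexed by $S$ is $\tfrac1n\tbinom nk$ times the number of such chains with $\pi_{n-k}$ of the special type. Summing over all $S$, this reduces to the unweighted identity $n^{k-2}\tbinom nk=\tfrac1n\tbinom nk\,n^{k-1}$ following from Krattenthaler–Müller, the special family being enumerated by $n^{k-1}$ (the number of minimal factorizations of type $(n-k+1,2,\dots,2)$). So what is required is a refinement of that count which keeps track of the non-interval pattern.

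The structural fact that makes such a refinement plausible is that whether a step $\pi_i\lessdot\pi_{i+1}$ is an interval or a near interval split depends only on the cyclic arrangement of the blocks being merged and not on their sizes; thus the non-interval pattern is an invariant of the "block skeleton'' obtained by forgetting how many points sit inside each block. The prefactor should then emerge from a cycle-lemma (necklace) argument, its characteristic shape $\tfrac1n\tbinom nk=N(n,k)/\tbinom n{k-1}$ being exactly the ratio between the number $N(n,k)$ of $k$-block noncrossing partitions and the number $\tbinom n{k-1}$ of those of the special type. I expect this cyclic enumeration to be the main obstacle: the number of realizations of a single skeleton is \emph{not} proportional to its number of special realizations — some skeletons (for instance the bottom $\{1,6\}\{2,5\}\{3\}\{4\}$ in $NC_6$) admit no special realization at all — so the proportionality can hold only after summing over all skeletons sharing a fixed non-interval pattern, and making this precise, together with the size-independence of $\wt$ used above, is the technical heart of the argument. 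As a fallback I would instead try to adapt the explicit bijection proving Theorem~\ref{mainth} so as to peel off the bottom $n-k$ steps directly, producing the factor $\tfrac1n\tbinom nk$ combinatorially.
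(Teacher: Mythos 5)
Your first step is correct, and it is worth noting that it uses Theorem~\ref{mainth} differently from the paper: prepending $\hat 0$ identifies the final chains whose bottom element has type $(n-k+1,1^{k-1})$ with $\mathcal{N}(\underline a)$ for $\underline a=(n-k+1,2,\dots,2)$, every cover relation with a $2$-fold split being automatically an interval or near interval split, and since $b_i=n-k+i-1$ the theorem (with your relabelling of variables) gives exactly $\prod_{i=n-k}^{n-2}\big(iX_i+(n-i)\big)$ for this special family. The gap is everything after that. The assertion that the weighted sum over \emph{all} final chains equals $\frac1n\binom nk$ times the weighted sum over the special ones is not an auxiliary fact you have reduced the corollary to: given your first step, it \emph{is} the corollary, and you offer no proof of it. Your own discussion shows why the sketched cycle-lemma argument does not close the gap: rotation $i\mapsto i+1 \bmod n$ preserves cover relations in $NC_n$ but does not preserve which steps are interval splits (this is precisely why $\wt$ is a nontrivial statistic), you exhibit yourself a skeleton with no special realization, so the proportionality cannot be established orbit by orbit, and no mechanism is proposed for why it should hold after summing over skeletons with a fixed non-interval pattern. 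So the technical heart of the statement is missing, and the fallback (``adapt the bijection proving Theorem~\ref{mainth}'') is only named, not carried out.

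The paper's proof avoids this comparison between two families of final chains by going through maximal chains. Specializing the transposition case of Theorem~\ref{mainth} at $X_1=\dots=X_{n-k-1}=0$ gives the weight generating function of the set $S$ of maximal chains $\pi_0\lessdot\dots\lessdot\pi_{n-1}$ whose bottom $n-k$ steps are all interval splits, namely
\[
  \Big(\prod_{i=k+1}^{n-1}i\Big)\prod_{i=n-k}^{n-2}\big(iX_i+(n-i)\big).
\]
Then Lemma~\ref{lem:des} (an element of rank $r$ has exactly $r$ lower covers $\rho$ with $\rho\sqsubset\pi$) shows that every final chain of length $k$ has exactly $(n-k)(n-k-1)\cdots 1=(n-k)!$ completions to a chain of $S$, and these completions do not affect the weight; dividing, one uses $\frac{1}{(n-k)!}\prod_{i=k+1}^{n-1}i=\frac1n\binom nk$. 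If you want to keep your approach, some counting argument of this strength is still required to produce the prefactor; once you invoke it, however, it proves the corollary directly and your (correct) computation for the special family becomes redundant, surviving only as a consistency check.
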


\begin{proof}
Let us consider the set $S$ of maximal chains $\pi_0 \lessdot \dots \lessdot \pi_{n-1} $ with the property that 
$\pi_0 \sqsubset \dots \sqsubset \pi_{n-k} $. The weight generating function of $S$ is obtained via a specialization 
of Theorem~\ref{mainth}, more precisely we take $r=n-1$, $a_i=2$ for all $i$, then $X_i=0$ for $1\leq i \leq n-k-1$.
This gives $\prod_{i=n-k}^{n-2} \big( iX_i + (n-i) \big) \times \prod_{i=k+1}^{n-1}i $.

Given a final chain $\pi_{n-k} \lessdot \dots \lessdot \pi_{n-1} = \hat 1$ of length $k$,
there are $(n-k)!$ ways to complete it into a maximal chain in $S$. This is a consequence
of Lemma~\ref{lem:des} below.
So the generating function for final chains of length $k$ is $\frac{1}{(n-k)!}$ times that of $S$.
The result follows.
\end{proof}

\begin{lemm} \label{lem:des}
 Let $\pi\in NC_n $, and $r$ denote its rank. There are exactly $r$ elements $\rho$ satisfying 
 $\rho \lessdot \pi$ and $\rho\sqsubset \pi$. 
\end{lemm}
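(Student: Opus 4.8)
The plan is to understand $\rho \lessdot \pi$ with $\rho \sqsubset \pi$ via the embedding into the symmetric group. Recall that $\rho \lessdot \pi$ means $\rho$ is covered by $\pi$ in $NC_n$, which (by the geometric characterization of the order) corresponds to $\sigma_\pi = \sigma_\rho z$ for a single transposition $z$ lying on a geodesic. By Lemma~\ref{near}, such a covering relation is obtained by \emph{merging} two blocks of $\rho$ into one block of $\pi$ (equivalently, $\rho$ is obtained from $\pi$ by \emph{splitting} one block of $\pi$ into exactly two blocks forming either an interval or near interval partition). The extra condition $\rho \sqsubset \pi$ requires precisely that this split is into an \emph{interval} partition (two blocks that are honest intervals of the ambient block), not merely a near interval partition.

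First I would reduce to a single block. Since splitting one block of $\pi$ into two interval pieces only affects that block, and a block $B$ of rank $m$ (i.e. $|B| = m+1$, so contributing $m$ to the rank) can be split into two nonempty interval pieces in exactly $m$ ways (the number of ways to cut an ordered set of $m+1$ elements into a nonempty left interval and nonempty right interval is $m$), I expect the count to localize block-by-block. The rank $r$ of $\pi$ equals $\sum_{B \in \pi}(|B|-1)$, and summing the per-block counts $|B|-1$ over all blocks $B$ of $\pi$ gives exactly $r$. So the key claim is: the elements $\rho$ with $\rho \lessdot \pi$ and $\rho \sqsubset \pi$ are in bijection with pairs (a block $B$ of $\pi$, a way to cut $B$ as an ordered set into two nonempty intervals).

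The main step is to verify that each such cut genuinely produces a valid $\rho \in NC_n$ with $\rho \lessdot \pi$ and $\rho \sqsubset \pi$, and that distinct cuts give distinct $\rho$, with no $\rho$ missed. That $\rho$ is noncrossing and $\rho \lessdot \pi$ follows from Lemma~\ref{near} (with $k=2$): splitting a block into an interval partition with two blocks is exactly a covering relation in $NC_n$, and the interval condition is what the relation $\sqsubset$ records by definition. Conversely, any $\rho$ with $\rho \lessdot \pi$ and $\rho \sqsubset \pi$ arises by splitting one block of $\pi$ into two interval pieces, again by Lemma~\ref{near}; the number of blocks being split into is $a_1 = 2$ for a covering relation, and $\sqsubset$ forces the interval (rather than near interval) case. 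Here I should be slightly careful about the meaning of ``interval'': the pieces must be intervals \emph{within the block $B$ as a totally ordered set}, and an interval partition of $B$ into two parts is determined by the cut point, giving $|B|-1$ choices.

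The main obstacle I anticipate is purely a bookkeeping subtlety: making sure the two-block split of a block $B$ into intervals is counted as $|B|-1$ and not, say, $\binom{|B|}{2}$ or some off-by-one variant, and confirming that the ``near interval'' option (the other kind of two-block split permitted in a covering relation) is correctly excluded by the hypothesis $\rho \sqsubset \pi$ — a two-block near interval partition of $B$ wraps around, merging the outermost pieces, and is exactly the case disallowed by $\sqsubset$. Once this localization and the per-block count of $|B|-1$ are established, the conclusion $\sum_{B\in\pi}(|B|-1) = r$ is immediate from the definition of rank in $NC_n$, completing the proof.
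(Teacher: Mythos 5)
Your proof is correct. Note that the paper states Lemma~\ref{lem:des} without any proof at all, so there is nothing to compare against; your argument is the natural one and it fills the gap completely. The structure is right: a cover $\rho \lessdot \pi$ splits exactly one block $B$ of $\pi$ into two parts forming an interval or near-interval partition of $B$ (Lemma~\ref{near} applied with a transposition as the cycle factor); the condition $\rho \sqsubset \pi$ retains precisely the interval splits; a block $B$ admits exactly $|B|-1$ such cuts; and summing gives $\sum_{B\in\pi}\bigl(|B|-1\bigr) = n - (\text{number of blocks of } \pi) = r$. The only small inaccuracy is that you also invoke Lemma~\ref{near} for the converse direction, namely that every interval cut of a block produces a valid element $\rho$ with $\rho\lessdot\pi$ and $\rho\sqsubset\pi$; that is not what Lemma~\ref{near} asserts, but it is immediate to check directly: every element of the lower piece precedes every element of the upper piece, so the two pieces cannot cross each other (nor any other block, since $B$ did not), hence $\rho$ is noncrossing, and a noncrossing refinement of $\pi$ with exactly one more block is automatically a cover in the graded lattice $NC_n$. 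With that one-line verification supplied, your proof is complete.
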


\section{\texorpdfstring{Decreasing edges in Cayley trees and cacti}{Decreasing edges in Cayley trees and cacti}}

\label{seccacti}

The material in this section comes from a reviewer's report. See acknowledgements for details.

It is well known that there are $n^{n-2}$ Cayley trees on $n$ vertices. Each Cayley tree is considered to be
rooted at the vertex with label $n$, and edges are oriented towards the root. An edge is {\it decreasing} if it
is oriented from $i$ to $j$ with $i>j$. The weight of a Cayley tree is a square free monomial in variables
$X_1 , X_2 , \dots$ such that there is a factor $X_{j-1}$ iff there is a decreasing edge starting from the vertex with
label j. The following result comes from \cite{kreweasmoszkowski}:

\begin{theo}
The weight generating function of Cayley trees is $\prod_{i=1}^{n-2} (iX_i + n-i)$.
\end{theo}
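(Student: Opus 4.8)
The plan is to exploit the fact that the polynomial to be computed is exactly the one already produced by Theorem~\ref{mainth} in the transposition case: taking $a_i=2$ for all $i$ and $r=n-1$ gives $b_i=i$, so the right-hand side of~\eqref{maineq} becomes $\prod_{i=1}^{n-2}(iX_i+n-i)$, which is precisely the target here. First I would reformulate the tree weight. Since the edge leaving a vertex $j\neq n$ points to its parent $p(j)$, a Cayley tree $T$ carries the monomial $\wt(T)=\prod_{j\,:\,p(j)<j}X_{j-1}$, where $j$ ranges over $\{2,\dots,n-1\}$ (vertex $1$ never has a smaller parent and $n$ is the root). Thus the variables occurring are exactly $X_1,\dots,X_{n-2}$, and for a fixed vertex $v$ there are $v-1$ admissible smaller parents (the ``decreasing'' case, weight $X_{v-1}$) and $n-v$ admissible larger parents. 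This already exposes the key numerical feature to be explained: the factor $(v-1)X_{v-1}+(n-v+1)$ carries the expected coefficient $v-1$ in front of $X_{v-1}$, but an \emph{extra} $+1$ in the increasing constant term.

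The first route is to transport the statement to Theorem~\ref{mainth} through the classical bijection between minimal transposition factorizations of the long cycle (equivalently, maximal chains in $NC_n$, equivalently elements of $\mathcal{N}(2,\dots,2)$) and labelled trees on $[n]$. The crucial step is to verify that this bijection is weight-preserving: a step with $\pi_{i-1}\nsqsubset\pi_i$, i.e. a near interval splitting, which is what produces the factor $X_i$ in $\wt(\Pi)$, must correspond to a decreasing edge of the associated tree, and moreover the step index $i$ must match the vertex index $v=i+1$. Granting this translation, the result is immediate from Theorem~\ref{mainth}. I expect the main obstacle of this route to lie exactly in this index matching: one must choose the bijection so that the $i$-th transposition's ``wrapping type'' is recorded at vertex $i+1$, which is a genuine constraint rather than mere bookkeeping.

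The second, self-contained route is to prove the formula directly, in the spirit of Kreweras and Moszkowski. Evaluating the product at $X_i=1$ gives $\prod_{i=1}^{n-2}n=n^{n-2}$, the number of trees, so one expects a weight-preserving bijection between trees and code sequences $(c_2,\dots,c_{n-1})$ with $c_v\in\{1,\dots,n\}$, in which the codes $\{1,\dots,v-1\}$ record the configurations with $p(v)<v$ (weight $X_{v-1}$) and the codes $\{v,\dots,n\}$ record those with $p(v)>v$ (weight $1$). The naive attempt $c_v=p(v)$ fails, since it offers only $n-v$ values for a larger parent whereas $n-v+1$ codes are required; this discrepancy of $+1$ per vertex is precisely where the acyclicity constraint couples the vertices, so no independence argument can work, and I expect it to be the main difficulty. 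I would resolve it by processing the vertices in a fixed order and inserting each one into the partially built structure by a rotation or re-rooting that absorbs the extra option, then checking that the induced map on codes is bijective and weight-preserving. The very same insertion scheme, carried out for cacti instead of trees, is what would yield the alternative proof of Theorem~\ref{mainth} announced at the start of this section.
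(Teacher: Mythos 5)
Your setup is correct and well-diagnosed: the reformulation of the tree weight as $\prod_{j\,:\,p(j)<j}X_{j-1}$, the count of $v-1$ smaller versus $n-v$ larger admissible parents, and the observation that the constant term $n-v+1$ in the factor exceeds the number of larger parents by one (so that no vertex-by-vertex independence argument can possibly work) pinpoint exactly where the difficulty of this theorem lies. Your two routes are also the natural ones: route 1 is the equivalence with the transposition case of Theorem~\ref{mainth} that the paper's introduction asserts, and route 2 is the coding method of Kreweras and Moszkowski --- which is in fact all the paper itself offers, since the theorem is stated there with a citation to \cite{kreweasmoszkowski} and no self-contained argument.

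The genuine gap is that neither route is carried out; both stop precisely where the mathematical content begins. In route 1, everything rests on exhibiting a bijection between $\mathcal{N}(2,\dots,2)$ and Cayley trees under which a near-interval (``wrapping'') split at a given step corresponds to a decreasing edge at a vertex whose label matches the step index; you write ``granting this translation, the result is immediate,'' but that translation \emph{is} the theorem. The classical D\'enes-type correspondence (transpositions $\mapsto$ edges) does not visibly have this property --- the tree weight is attached to the child endpoint of each edge, while the chain weight is attached to the step index, and aligning the two is nontrivial bookkeeping that you correctly flag but never resolve. In route 2, the resolution of the $+1$ discrepancy is delegated to an unspecified ``rotation or re-rooting that absorbs the extra option,'' followed by ``checking that the induced map on codes is bijective and weight-preserving''; that insertion scheme is exactly the clever construction of \cite{kreweasmoszkowski}, and without exhibiting it and performing the verification, no part of the identity $\sum_T \wt(T)=\prod_{i=1}^{n-2}(iX_i+n-i)$ has been established. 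As it stands, the proposal is a correct framing and a plausible plan of attack, but not a proof of the statement.
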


In the same vein, it is possible to give an interpretation of the right-hand side of \eqref{maineq} using cacti (see \cite{springer}). 
These trees and cacti are in bijection with chains of noncrossing partitions, and the factorized generating function can 
be proved using codes similar to Prüfer codes.

\section*{Acknlowdgement}
We thank the anonymous reviewer who informed us that our work is related with decreasing edges in
Cayley trees and cacti, and in particular gave an alternative proof of our result with a different method. 
Only parts of this could be reproduced here, in Section~\ref{seccacti}.

\end{document}